\documentclass{article}
\pdfoutput=1
%%%%%%%%%%%%%%%%%%%%%%%%%%%%%%%%%%%%%%%%%%%%%%%%%%%%%%%%%%%%%%%%%%%%%%%%%%%%%%%%%%%%%%%%%%%%%%%%%%%%%%%%%%%%%%%%%%%%%%%%%%%%%%%%%%%%%%%%%%%%%%%%%%%%%%%%%%%%%%%%%%%%%%%%%%%%%%%%%%%%%%%%%%%%%%%%%%%%%%%%%%%%%%%%%%%%%%%%%%%%%%%%%%%%%%%%%%%%%%%%%%%%%%%%%%%%
\usepackage{amsmath}
\usepackage{amssymb}
\usepackage{amsfonts}
\usepackage{pgf}

\setcounter{MaxMatrixCols}{10}
%TCIDATA{OutputFilter=LATEX.DLL}
%TCIDATA{Version=5.00.0.2552}
%TCIDATA{<META NAME="SaveForMode" CONTENT="1">}
%TCIDATA{Created=Tuesday, July 24, 2007 11:38:54}
%TCIDATA{LastRevised=Wednesday, April 15, 2009 16:27:15}
%TCIDATA{<META NAME="GraphicsSave" CONTENT="32">}
%TCIDATA{<META NAME="DocumentShell" CONTENT="Standard LaTeX\Blank - Standard LaTeX Article">}
%TCIDATA{Language=American English}
%TCIDATA{CSTFile=40 LaTeX article.cst}

\pgfdeclareimage[height=8cm]{}{figure1}
\pgfdeclareimage[height=8cm]{}{figure2}
\pgfdeclareimage[height=8cm]{}{figure3}
\newtheorem{theorem}{Theorem}

\newtheorem{proposition}[theorem]{Proposition}
\newtheorem{remark}[theorem]{Remark}

\newenvironment{proof}[1][Proof]{\noindent\textbf{#1.} }{\ \rule{0.5em}{0.5em}}
\input{tcilatex}
\sloppy
\sloppy

\begin{document}

\title{On the Distribution of Penalized Maximum Likelihood Estimators: The
LASSO, SCAD, and Thresholding}
\author{Benedikt M. P\"{o}tscher \\
%EndAName
Department of Statistics, University of Vienna\\
and \and Hannes Leeb \\
%EndAName
Department of Statistics, Yale University}
\date{First version: October 2007\\
This version: March 2009}
\maketitle

\begin{abstract}
We study the distributions of the LASSO, SCAD, and thresholding estimators,
in finite samples and in the large-sample limit. The asymptotic
distributions are derived for both the case where the estimators are tuned
to perform consistent model selection and for the case where the estimators
are tuned to perform conservative model selection. Our findings complement
those of Knight and Fu (2000) and Fan and Li (2001). We show that the
distributions are typically highly nonnormal regardless of how the estimator
is tuned, and that this property persists in large samples. The uniform
convergence rate of these estimators is also obtained, and is shown to be \
slower than $n^{-1/2}$ in case the estimator is tuned to perform consistent
model selection. An impossibility result regarding estimation of the
estimators' distribution function is also provided.

\textit{MSC 2000 subject classification}. Primary 62J07, 62J05, 62F11,
62F12, 62E15.

\textit{Key words and phrases}. Penalized maximum likelihood, LASSO, SCAD,
thresholding, post-model-selection estimator, finite-sample distribution,
asymptotic distribution, oracle property, estimation of distribution,
uniform consistency.
\end{abstract}

\section{Introduction}

Penalized maximum likelihood estimators have been studied intensively in the
last few years. A prominent example is the least absolute selection and
shrinkage (LASSO) estimator of Tibshirani (1996). Related variants of the
LASSO include the Bridge estimators studied by Frank and Friedman (1993),
least angle regression (LARS) of Efron, Hastie, Johnston, Tibshirani (2004),
or the smoothly clipped absolute deviation (SCAD) estimator of Fan and Li
(2001). Other estimators that fit into this framework are hard- and
soft-thresholding estimators. While many properties of penalized maximum
likelihood estimators are now well understood, the understanding of their
distributional properties, such as finite-sample and large-sample limit
distributions, is still incomplete. The probably most important contribution
in this respect is Knight and Fu (2000) who study the asymptotic
distribution of the LASSO estimator (and of Bridge estimators more
generally) when the tuning parameter governing the influence of the penalty
term is chosen so that the LASSO acts as a conservative model selection
procedure (that is, a procedure that does not select underparameterized
models asymptotically, but selects overparameterized models with positive
probability asymptotically); see also Knight (2008). In Knight and Fu
(2000), the asymptotic distribution is obtained in a fixed-parameter as well
as in a standard local alternatives setup. This is complemented by a result
in Zou (2006) who considers the fixed-parameter asymptotic distribution of
the LASSO when tuned to act as a consistent model selection procedure.
Another contribution is Fan and Li (2001) who derive the asymptotic
distribution of the SCAD estimator when the tuning parameter is chosen so
that the SCAD estimator performs consistent model selection; in particular,
they establish the so-called `oracle' property for this estimator. The
results in that latter paper are also fixed-parameter asymptotic results. It
is well-known that fixed-parameter (i.e., pointwise) asymptotic results can
give a wrong picture of the estimators' actual behavior, especially when the
estimator performs model selection; see, e.g., Kabaila (1995), or Leeb and P%
\"{o}tscher (2005, 2008a). Therefore, it is interesting to take a closer
look at the actual distributional properties of such estimators.

In the present paper we study the finite-sample as well as the asymptotic
distributions of the hard-thresholding, the LASSO (which coincides with
soft-thresholding in our context), and the SCAD estimator. We choose a model
that is simple enough to facilitate an explicit finite-sample analysis that
showcases the strengths and weaknesses of these estimators in a readily
accessible framework. Yet, the model considered here is rich enough to
demonstrate a variety of phenomena that will also occur in more complex
models. We study both the cases where the estimators are tuned to perform
conservative model selection as well as where the tuning is such that the
estimators perform consistent model selection. We find that the
finite-sample distributions can be decisively non-normal (e.g., multimodal).
Moreover, we find that a fixed-parameter asymptotic analysis gives highly
misleading results. In particular, the `oracle' property, which is based on
a fixed-parameter asymptotic analysis, is shown to not provide a reliable
assessment of the estimators' actual performance. For these reasons, we also
obtain the asymptotic distributions of the estimators mentioned before in a
general `moving parameter' asymptotic framework, which better captures
essential features of the finite-sample distribution. [Interestingly, it
turns out that in the consistent model selection case a `moving parameter'
asymptotic framework more general than the usual $n^{-1/2}$-local asymptotic
framework is necessary to exhibit the full range of possible limiting
distributions.] Furthermore, we derive the uniform convergence rate of the
estimators and show that it is slower than $n^{-1/2}$ in the case where the
estimators are tuned to perform consistent model selection. This again
exposes the misleading character of the `oracle' property. We also show that
the finite-sample distribution of these estimators can not be estimated in
any reasonable sense, complementing results of this sort in the literature
(Leeb and P\"{o}tscher (2006a,b, 2008b), P\"{o}tscher (2006)). In a
subsequent paper, P\"{o}tscher and Schneider (2009), analogous results are
obtained for the adaptive LASSO estimator.

We note that penalized maximum likelihood estimators are intimately related
to more classical post-model-selection estimators. The distributional
properties of the latter estimators have been studied by Sen (1979), P\"{o}%
tscher (1991), and Leeb and P\"{o}tscher (2003, 2005, 2006a,b, 2008b).

The paper is organized as follows: The model and the estimators are
introduced in Section \ref{mo_est}, and the model selection probabilities
are discussed in Section \ref{prob}. Consistency, uniform consistency, and
uniform convergence rates of the estimators are the subject of Section \ref%
{cons&ucons}. The finite-sample distributions are derived in Section \ref%
{finite}, whereas the asymptotic distributions are studied in Section \ref%
{asydistribs}. Section \ref{imposs} provides impossibility results
concerning the estimation of the finite-sample distributions of the
estimators, and Section~\ref{conclusion} concludes and summarizes our main
findings. The appendix contains results on the asymptotic distribution in
the consistent model selection case when the estimators are scaled by the
inverse of the uniform convergence rate obtained in Section \ref{cons&ucons}
rather than by $n^{1/2}$.

\section{The Model and the Estimators\label{mo_est}}

We start with the orthogonal linear regression model%
\begin{equation*}
Y=X\beta +u
\end{equation*}%
where $X^{\prime }X$ is diagonal and the vector $u$ is multivariate normal
with mean zero and variance covariance matrix $\sigma ^{2}I$. The
multivariate linear model with orthogonal design occurs in many important
settings, including wavelet regression or the analysis of variance. Because
we consider penalized least-squares estimators with a penalty term that is
separable with respect to $\beta $, the resulting estimators for the
components of $\beta $ are mutually independent and each component estimator
is equivalent to the corresponding penalized least squares estimator in a
univariate Gaussian location model. We therefore restrict attention to this
simple model in the sequel without loss of generality.

Suppose $y_{1},\ldots ,y_{n}$ are independent and each distributed as $%
N(\theta ,\sigma ^{2})$. We assume for simplicity that $\sigma ^{2}$ is
known, and hence we can set $\sigma ^{2}=1$ without loss of generality.
Apart from the standard maximum likelihood (least squares) estimator $\bar{y}
$ we consider the following estimators:

\begin{enumerate}
\item The hard-thresholding estimator $\hat{\theta}_{H}=\bar{y}\boldsymbol{1}%
(\left\vert \bar{y}\right\vert >\eta _{n})$ where the threshold $\eta _{n}$
is a positive real number and $\boldsymbol{1}(\cdot )$ denotes the indicator
function. The threshold $\eta _{n}$ is a tuning parameter set by the user.
The hard-thresholding estimator can be viewed as a penalized least-squares
estimator that arises as the solution to the minimization problem\footnote{%
The penalty corresponding to hard thresholding given in Fan and Li (2001)
differs from the correct one that we use here, because of a scaling error in
equations (2.3) and (2.4) of Fan and Li (2001).} 
\begin{equation*}
\dsum\limits_{t=1}^{n}(y_{t}-\theta )^{2}\;\;+\;\; n\Big( \eta
_{n}^{2}-(\left\vert \theta \right\vert -\eta _{n})^{2}\boldsymbol{1}%
(\left\vert \theta \right\vert <\eta _{n})\Big) .
\end{equation*}%
We also note here that for $\eta _{n}=n^{-1/4}$ the hard-thresholding
estimator is a simple instance of Hodges' estimator (see, e.g., Lehmann and
Casella (1998), pp. 440-443).

\item The soft-thresholding estimator $\hat{\theta}_{S}=\limfunc{sign}(\bar{y%
})(\left\vert \bar{y}\right\vert -\eta _{n})_{+}$ with $\eta _{n}$ as
before. [Here $\limfunc{sign}(x)$ is defined as $-1$, $0$, and $1$ in case $%
x<0$, $x=0$, and $x>0$, respectively, and $z_{+}$ is shorthand for $\max
\{z,0\}$.] That estimator arises as the solution to the penalized
least-squares problem%
\begin{equation*}
\dsum\limits_{t=1}^{n}(y_{t}-\theta )^{2}+2n\eta _{n}\left\vert \theta
\right\vert
\end{equation*}%
which shows that $\hat{\theta}_{S}$ coincides with the LASSO in the form
considered in Knight and Fu (2000). Note that the tuning parameter in the
latter reference is $\lambda _{n}=2n\eta _{n}$.

\item The SCAD-estimator of Fan and Li (2001) is -- in the present context
-- given by 
\begin{equation*}
\hat{\theta}_{SCAD}=\left\{ 
\begin{array}{ll}
\limfunc{sign}(\bar{y})(\left\vert \bar{y}\right\vert -\eta _{n})_{+} & 
\text{if \ \ }\left\vert \bar{y}\right\vert \leq 2\eta _{n}, \\ 
\left\{ (a-1)\bar{y}-\limfunc{sign}(\bar{y})a\eta _{n}\right\} /(a-2) & 
\text{if \ \ }2\eta _{n}<\left\vert \bar{y}\right\vert \leq a\eta _{n}, \\ 
\bar{y} & \text{if \ \ }\left\vert \bar{y}\right\vert >a\eta _{n},%
\end{array}%
\right.
\end{equation*}%
where $a>2$ is an additional tuning parameter. This estimator can be viewed
as a simple combination of soft-thresholding for `small' $\left\vert \bar{y}%
\right\vert $ and hard-thresholding for `large' $\left\vert \bar{y}%
\right\vert $, with a (piecewise) linear interpolation in-between.
Alternatively, the estimator can also be obtained as a solution to a
penalized least squares problem; see Fan and Li (2001) for details. We note
that the SCAD-estimator is closely related to the firm shrinkage estimator
of Bruce and Gao (1996).
\end{enumerate}

\section{Model Selection Probabilities\label{prob}}

Each of the three estimators discussed above induces a selection between the
restricted model $M_{R}$ consisting only of the $N(0,1)$-distribution and
the unrestricted model $M_{U}=\{N(\theta ,1):\theta \in \mathbb{R}\}$ in an
obvious way, i.e., $M_{R}$ is selected if the respective estimator for $%
\theta $ equals zero, and $M_{U}$ is selected otherwise. In the present
context, the hard-thresholding estimator $\hat{\theta}_{H}$ is furthermore
nothing else than a traditional pre-test estimator that chooses between the
unrestricted maximum likelihood estimator $\hat{\theta}_{U}=\bar{y}$ and the
restricted maximum likelihood estimator $\hat{\theta}_{R}\equiv 0$ according
to the outcome of a $t$-type test for the hypothesis $\theta =0$.

We now study the model selection probabilities, i.e., the probabilities that
model $M_{U}$ or $M_{R}$, respectively, is selected. As they add up to one,
it suffices to consider one of them. First note that the probability\ of
selecting the model $M_{R}$ is the same for each of the estimators $\hat{%
\theta}_{H}$, $\hat{\theta}_{S}$, and $\hat{\theta}_{SCAD}$ (provided the
same tuning parameter $\eta _{n}$ is used). This is so because the events $\{%
\hat{\theta}_{H}=0\}$, $\{\hat{\theta}_{S}=0\}$, and $\{\hat{\theta}%
_{SCAD}=0\}$ coincide. Hence, 
\begin{equation}
\begin{split}
P_{n,\theta }(\hat{\theta}=0)& =P_{n,\theta }(\left\vert \bar{y}\right\vert
\leq \eta _{n})=\Pr \left( \left\vert Z+n^{1/2}\theta \right\vert \leq
n^{1/2}\eta _{n}\right) \\
& =\Phi (n^{1/2}(-\theta +\eta _{n}))-\Phi (n^{1/2}(-\theta -\eta _{n})),
\end{split}
\label{model_prob}
\end{equation}%
where $\hat{\theta}$ stands for any of the estimators $\hat{\theta}_{H}$, $%
\hat{\theta}_{S}$, and $\hat{\theta}_{SCAD}$, and where $Z$ is a standard
normal random variable with cumulative distribution function (cdf) $\Phi $.
Here we use $P_{n,\theta }$ to denote the probability governing a sample of
size $n$ when $\theta $ is the true parameter, and $\Pr $ to denote a
generic probability measure.

In the following we shall always impose the condition that $\eta
_{n}\rightarrow 0$ for asymptotic considerations, which guarantees that the
probability of incorrectly selecting the restricted model $M_{R}$ (i.e.,
selecting $M_{R}$ if the true $\theta $ is non-zero) vanishes
asymptotically. Conversely, if this probability vanishes asymptotically for 
\textit{every} $\theta \neq 0$, then $\eta _{n}\rightarrow 0$ follows.
Therefore, the condition $\eta _{n}\rightarrow 0$ is a basic one and without
this condition the estimators $\hat{\theta}_{H}$, $\hat{\theta}_{S}$, and $%
\hat{\theta}_{SCAD}$ do not seem to be of much interest (from an asymptotic
viewpoint). As we shall see in the next section, this basic condition is
also equivalent to consistency for $\theta $ of the hard-thresholding
(soft-thresholding, SCAD) estimator.

Given the condition $\eta _{n}\rightarrow 0$, two cases need to be
distinguished: (i) $n^{1/2}\eta _{n}\rightarrow e$, $0\leq e<\infty $ and
(ii) $n^{1/2}\eta _{n}\rightarrow e=\infty $.\footnote{%
There is no loss in generality here in the sense that the general case where
only $\eta _{n}\rightarrow 0$ holds can always be reduced to case (i) or
case (ii) by passing to subsequences.} In case (i) the hard-thresholding
(soft-thresholding, SCAD) estimator acts as a conservative model selection
procedure, i.e., the probability of selecting the unrestricted model $M_{U}$
has a positive limit even when $\theta =0$, whereas in case (ii) it acts as
a consistent model selection procedure, i.e., this probability vanishes in
the limit when $\theta =0$. This is immediately seen by inspection of (\ref%
{model_prob}). These facts have long been known, see Bauer, P\"{o}tscher,
and Hackl (1988).

The results discussed in the preceding paragraph are of a `pointwise'
asymptotic nature in the sense that the value of $\theta $ is held fixed
when sample size $n$ goes to infinity. As noted before, such pointwise
asymptotic results often miss essential aspects of the finite-sample
behavior, especially in the context of model selection; cf. Leeb and P\"{o}%
tscher (2005). To obtain large-sample results that better capture
finite-sample phenomena, we next present a `moving parameter' asymptotic
analysis, i.e., we allow $\theta $ to vary with $n$ as $n\rightarrow \infty $%
. The following result shows in particular that convergence of the model
selection probability to its limit in a pointwise asymptotic analysis is 
\emph{not} uniform in $\theta \in \mathbb{R}$ (in fact, it fails to be
uniform in any neighborhood of $\theta =0$).

\begin{proposition}
\label{selection_prob}Let $\hat{\theta}$ be either $\hat{\theta}_{H}$, $\hat{%
\theta}_{S}$, or $\hat{\theta}_{SCAD}$. Suppose that $\eta _{n}\rightarrow 0$
and $n^{1/2}\eta _{n}\rightarrow e$ with $0\leq e\leq \infty $. \newline
(i) Assume $e<\infty $ (corresponding to conservative model selection).
Suppose the true parameter $\theta _{n}\in \mathbb{R}$ satisfies $%
n^{1/2}\theta _{n}\rightarrow \nu \in \mathbb{R}\cup \{-\infty ,\infty \}$.
Then 
\begin{equation*}
\lim_{n\rightarrow \infty }P_{n,\theta _{n}}(\hat{\theta}=0)=\Phi (-\nu
+e)-\Phi (-\nu -e).
\end{equation*}%
(ii) Assume $e=\infty $ (corresponding to consistent model selection).
Suppose $\theta _{n}\in {\mathbb{R}}$ satisfies $\theta _{n}/\eta
_{n}\rightarrow \zeta \in {\mathbb{R}}\cup \{-\infty ,\infty \}$. Then

\begin{enumerate}
\item $\left\vert \zeta \right\vert <1$ implies $\lim_{n\rightarrow \infty
}P_{n,\theta _{n}}(\hat{\theta}=0)=1$;

\item $|\zeta|=1$ and $n^{1/2}(\eta _{n}-\zeta \theta _{n})\rightarrow r$
for some $r\in \mathbb{R\cup \{-\infty },\mathbb{\infty \}}$, implies $%
\lim_{n\rightarrow \infty }P_{n,\theta _{n}}(\hat{\theta}=0)=\Phi (r)$;

\item $|\zeta|>1$ implies $\lim_{n\rightarrow \infty }P_{n,\theta _{n}}(\hat{%
\theta}=0)=0$.
\end{enumerate}
\end{proposition}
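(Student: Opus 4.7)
The plan is to reduce everything to the exact formula
\[
P_{n,\theta_n}(\hat\theta=0) \;=\; \Phi\bigl(n^{1/2}(\eta_n-\theta_n)\bigr)\;-\;\Phi\bigl(-n^{1/2}(\eta_n+\theta_n)\bigr)
\]
supplied by (\ref{model_prob}), and to track the limits of the two arguments of $\Phi$ under the hypotheses of each case. Since $\Phi$ is continuous on the extended real line (with $\Phi(-\infty)=0$, $\Phi(+\infty)=1$), the result will follow by routine passage to the limit, once the arguments have been written in a form that makes the relevant scaling visible.

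For part (i), I would simply write $n^{1/2}(\eta_n-\theta_n)=n^{1/2}\eta_n - n^{1/2}\theta_n \to e-\nu$ and $-n^{1/2}(\eta_n+\theta_n)\to -e-\nu$ (interpreting these in the extended reals when $\nu=\pm\infty$) and read off the limit from continuity of $\Phi$. When $\nu=\pm\infty$ the two arguments tend to the same signed infinity, so the difference is $0$, consistent with $\Phi(-\nu+e)-\Phi(-\nu-e)=0$ in that convention.

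For part (ii) the key maneuver is to factor out $n^{1/2}\eta_n$ (which tends to $\infty$) and write
\[
n^{1/2}(\eta_n-\theta_n) \;=\; n^{1/2}\eta_n\bigl(1-\theta_n/\eta_n\bigr),\qquad -n^{1/2}(\eta_n+\theta_n) \;=\; -n^{1/2}\eta_n\bigl(1+\theta_n/\eta_n\bigr).
\]
If $|\zeta|<1$, both $1-\theta_n/\eta_n$ and $1+\theta_n/\eta_n$ have positive limits, so the first argument tends to $+\infty$ and the second to $-\infty$, giving limit $1-0=1$. If $|\zeta|>1$ (including $\zeta=\pm\infty$), exactly one of the two factors $1\mp\theta_n/\eta_n$ has a negative limit (or tends to $-\infty$) while the other has a positive limit, so both arguments of $\Phi$ tend to the \emph{same} signed infinity, yielding limit $0$.

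The borderline case $|\zeta|=1$ is the only delicate one and will be the main point of the argument. Here the two factorizations above are not both useful, because one of the factors $1\mp\theta_n/\eta_n$ tends to $0$, so the product $n^{1/2}\eta_n\cdot o(1)$ is indeterminate. This is precisely why the additional hypothesis $n^{1/2}(\eta_n-\zeta\theta_n)\to r$ is imposed: for $\zeta=+1$ it directly identifies $n^{1/2}(\eta_n-\theta_n)\to r$, while the other argument $-n^{1/2}(\eta_n+\theta_n)$ equals $-n^{1/2}(\eta_n-\theta_n)-2n^{1/2}\theta_n$, and since $\theta_n/\eta_n\to 1$ with $n^{1/2}\eta_n\to\infty$ we have $n^{1/2}\theta_n\to\infty$, so this argument tends to $-\infty$; thus the limit is $\Phi(r)-0=\Phi(r)$. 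For $\zeta=-1$ the roles of the two arguments are swapped: the second equals $-r$ in the limit and the first tends to $+\infty$, so by the symmetry $1-\Phi(-r)=\Phi(r)$ the same value is obtained. Throughout I would allow $r\in\{-\infty,+\infty\}$ with the convention $\Phi(\pm\infty)\in\{0,1\}$, and I would note at the start that these limit evaluations are valid irrespective of which of the three estimators $\hat\theta_H$, $\hat\theta_S$, $\hat\theta_{SCAD}$ is considered, because by the discussion preceding (\ref{model_prob}) they share the same event $\{\hat\theta=0\}$.
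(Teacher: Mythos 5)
Your proof follows the same route as the paper: read off the limits from the explicit formula (\ref{model_prob}) after factoring out $n^{1/2}\eta_n$ in part (ii), and use the hypothesis on $n^{1/2}(\eta_n-\zeta\theta_n)$ to settle the boundary case $|\zeta|=1$. One small wrinkle: in the $\zeta=1$ subcase you evaluate the second argument via the rearrangement $-n^{1/2}(\eta_n+\theta_n)=-n^{1/2}(\eta_n-\theta_n)-2n^{1/2}\theta_n$, which is an indeterminate $\infty-\infty$ when $r=-\infty$; the factorization $-n^{1/2}\eta_n(1+\theta_n/\eta_n)$ that you already wrote down (with $1+\theta_n/\eta_n\to 2>0$) handles all values of $r$ at once and is the route the paper takes.
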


\begin{proof}
The proof of part (i) is immediate from (\ref{model_prob}). To prove part
(ii) we use (\ref{model_prob}) to rewrite $P_{n,\theta _{n}}(\hat{\theta}
=0) $ as 
\begin{eqnarray*}
P_{n,\theta _{n}}(\hat{\theta} =0) &=&\Phi (n^{1/2}\eta _{n}(1-\theta
_{n}/\eta _{n}))-\Phi (n^{1/2}\eta _{n}(-1-\theta _{n}/\eta _{n})).
\end{eqnarray*}%
The first and the third claim follow immediately from this. For the second
claim, assume first that $\zeta=1$. Then $\Phi(n^{1/2}\eta_n(1-\theta_n/%
\eta_n)) = \Phi(n^{1/2}(\eta_n - \zeta \theta_n))$ obviously converges to $%
\Phi (r)$, whereas $\Phi(n^{1/2}\eta_n(-1-\theta_n/\eta_n))$ converges to
zero. The case $\zeta = -1$ is handled similarly.
\end{proof}

Proposition~\ref{selection_prob} in fact completely describes the
large-sample behavior of the model selection probability without \textit{any}
conditions on the parameter $\theta $, in the sense that all possible
accumulation points of the model selection probability along \textit{%
arbitrary} sequences of $\theta _{n}$ can be obtained in the following
manner: Just apply the result to subsequences and note that, by compactness
of $\mathbb{R\cup \{-\infty },\mathbb{\infty \}}$, we can select from each
subsequence a further subsequence such that all relevant quantities such as $%
n^{1/2}\theta _{n}$, $\theta _{n}/\eta _{n}$, $n^{1/2}(\eta _{n}-\theta
_{n}) $, or $n^{1/2}(\eta _{n}+\theta _{n})$ converge in $\mathbb{R\cup
\{-\infty },\mathbb{\infty \}}$ along this further subsequence.

In the conservative model selection case we see from Proposition \ref%
{selection_prob} that the usual local alternative parameter sequences
describe the asymptotic behavior. In particular, if $\theta _{n}$ is local
to $\theta =0$ in the sense that $\theta _{n}=\nu /n^{1/2}$, the local
alternatives parameter $\nu $ governs the limiting model selection
probability. Deviations of $\theta _{n}$ from $\theta =0$ of order $%
1/n^{1/2} $ are detected with positive probability asymptotically and
deviations of larger order are detected with probability one asymptotically
in this case. In the consistent model selection case, however, a different
picture emerges. Here, Proposition \ref{selection_prob} shows that local
deviations of $\theta _{n}$ from $\theta =0$ that are of the order $%
1/n^{1/2} $ are not detected by the model selection procedures at all!%
\footnote{%
For such deviations this also immediately follows from a contiguity argument.%
} In fact, even larger deviations of $\theta $ from zero go asymptotically
unnoticed by the model selection procedure, namely as long as $\theta
_{n}/\eta _{n}\rightarrow \zeta $, $\left\vert \zeta \right\vert <1$. [Note
that these larger deviations would be picked up by a \emph{conservative}
procedure with probability one asymptotically.] This unpleasant consequence
of model selection consistency has a number of repercussions as we shall see
later on. For a more detailed discussion of these phenomena in the context
of post-model-selection estimators see Leeb and P\"{o}tscher (2005).

The speed of convergence of the model selection probability to its limit in
part (i) of the proposition is governed by the slower of the convergence
speeds of $n^{1/2}\eta _{n}$ and $n^{1/2}\theta _{n}$. In part (ii) it is
exponential in $n^{1/2}\eta _{n}$ in cases 1 and 3, and is governed by the
convergence speed of $n^{1/2}\eta _{n}$ and $n^{1/2}(\eta _{n}-\zeta \theta
_{n})$ in case 2.

\section{Consistency, uniform consistency, and uniform convergence rate of $%
\hat{\protect\theta}_{H}$, $\hat{\protect\theta}_{S}$, and $\hat{\protect%
\theta}_{SCAD}$\label{cons&ucons}}

It is easy to see that the basic condition $\eta _{n}\rightarrow 0$
discussed in the preceding section is in fact also equivalent to consistency
of $\hat{\theta}_{H}$ for $\theta $, i.e., to 
\begin{equation}
\lim_{n\rightarrow \infty }P_{n,\theta }\left( \left\vert \hat{\theta}%
_{H}-\theta \right\vert >\varepsilon \right) \;\;=\;\;0\quad \text{for every 
}\varepsilon >0\text{ and every }\theta \in \mathbb{R}.  \notag  \label{cons}
\end{equation}%
The same is also true for $\hat{\theta}_{S}$ and $\hat{\theta}_{SCAD}$, as
is elementary to verify. [At least the sufficiency parts are well-known, see
P\"{o}tscher (1991) for hard-thresholding, Knight and Fu (2000) for
soft-thresholding\footnote{%
Knight and Fu (2000) consider the LASSO-estimator in a linear regression
model without an intercept, hence their result does not directly apply to
the case considered here.}, and Fan and Li (2001) for SCAD.] In fact, under
this basic condition on $\eta _{n}$, the estimators are even uniformly
consistent with a certain rate as we show next:

\begin{theorem}
\label{unif_cons}Assume $\eta _{n}\rightarrow 0$. Let $\hat{\theta}$ stand
for either $\hat{\theta}_{H}$, $\hat{\theta}_{S}$, or $\hat{\theta}_{SCAD}$.
Then $\hat{\theta}$ is uniformly consistent, i.e., 
\begin{equation}
\lim_{n\rightarrow \infty }\sup_{\theta \in \mathbb{R}}P_{n,\theta }\left(
\left\vert \hat{\theta}-\theta \right\vert >\varepsilon \right)
\;\;=\;\;0\quad \text{for every }\varepsilon >0.  \notag  \label{ucons}
\end{equation}%
In fact, the supremum in the above expression converges to zero
exponentially fast for every $\varepsilon >0$. Furthermore, set $a_{n}=\min
\{n^{1/2},\eta _{n}^{-1}\}$. Then for every $\varepsilon >0$ there exists a
(nonnegative) real number $M$ such that%
\begin{equation}
\sup_{n\in {\mathbb{N}}}\sup_{\theta \in \mathbb{R}}P_{n,\theta }\left(
a_{n}\left\vert \hat{\theta}-\theta \right\vert >M\right)
\;\;<\;\;\varepsilon  \notag  \label{unif_cons_rate}
\end{equation}%
holds. In particular, $\hat{\theta}$ is uniformly $\min \{n^{1/2},\eta
_{n}^{-1}\}$-consistent.
\end{theorem}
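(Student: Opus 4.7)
\textbf{Proof proposal for Theorem \ref{unif_cons}.}

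The plan is to exploit the fact that each of the three estimators is obtained from $\bar{y}$ by a deterministic shrinkage operation whose distance from the identity is controlled by $\eta_{n}$. The key preliminary observation I would establish is:
\begin{equation*}
|\hat{\theta}-\bar{y}|\leq \eta_{n}\quad\text{for all }\bar{y}\in\mathbb{R},
\end{equation*}
for each of $\hat{\theta}_{H}$, $\hat{\theta}_{S}$, $\hat{\theta}_{SCAD}$. For $\hat{\theta}_{H}$ this is immediate because $\hat{\theta}_{H}-\bar{y}=-\bar{y}\boldsymbol{1}(|\bar{y}|\leq \eta_{n})$. For $\hat{\theta}_{S}$ it is the standard property of soft-thresholding. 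For $\hat{\theta}_{SCAD}$ I would check the three regions in its definition: on $\{|\bar{y}|\leq 2\eta_{n}\}$ it is soft-thresholding, on $\{|\bar{y}|>a\eta_{n}\}$ the difference is zero, and on the middle region a short algebraic computation gives $\hat{\theta}_{SCAD}-\bar{y}=(\bar{y}-\operatorname{sign}(\bar{y})a\eta_{n})/(a-2)$, whose absolute value on $2\eta_{n}<|\bar{y}|\leq a\eta_{n}$ is at most $(a\eta_{n}-2\eta_{n})/(a-2)=\eta_{n}$.

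Armed with this bound, the triangle inequality gives
\begin{equation*}
|\hat{\theta}-\theta|\;\leq\;\eta_{n}+|\bar{y}-\theta|,
\end{equation*}
and since under $P_{n,\theta}$ the variable $n^{1/2}(\bar{y}-\theta)$ is standard normal (so its distribution is free of $\theta$), the right-hand side is stochastically dominated by a quantity whose law does not depend on $\theta$. Uniform consistency and the exponential rate then follow: for any $\varepsilon>0$, eventually $\eta_{n}<\varepsilon/2$, so
\begin{equation*}
\sup_{\theta\in\mathbb{R}}P_{n,\theta}\bigl(|\hat{\theta}-\theta|>\varepsilon\bigr)\;\leq\;\Pr\bigl(|Z|>n^{1/2}\varepsilon/2\bigr)\;\leq\;2\exp(-n\varepsilon^{2}/8)
\end{equation*}
by the standard Gaussian tail bound, where $Z\sim N(0,1)$.

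For the uniform convergence rate, I would multiply through by $a_{n}=\min\{n^{1/2},\eta_{n}^{-1}\}$ and use $a_{n}\eta_{n}\leq 1$ and $a_{n}\leq n^{1/2}$ to obtain
\begin{equation*}
a_{n}|\hat{\theta}-\theta|\;\leq\;1+n^{1/2}|\bar{y}-\theta|\;=\;1+|Z|.
\end{equation*}
Hence
\begin{equation*}
\sup_{n\in\mathbb{N}}\sup_{\theta\in\mathbb{R}}P_{n,\theta}\bigl(a_{n}|\hat{\theta}-\theta|>M\bigr)\;\leq\;\Pr\bigl(|Z|>M-1\bigr),
\end{equation*}
which is made less than $\varepsilon$ by choosing $M$ large enough.

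The only non-routine step is the case analysis establishing $|\hat{\theta}-\bar{y}|\leq\eta_{n}$ for SCAD; everything else then reduces to a single tail bound for a standard normal. There is no genuine analytic obstacle because all three estimators are bounded perturbations of $\bar{y}$ on the scale $\eta_{n}$, and the sampling error $\bar{y}-\theta$ has a $\theta$-free distribution, which is what makes the bounds uniform in $\theta$.
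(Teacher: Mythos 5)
Your proof is correct, and it takes a genuinely different route from the paper. The paper establishes the result for $\hat{\theta}_{H}$ first via a two-term decomposition of the event $\{|\hat{\theta}_{H}-\theta|>\varepsilon\}$ according to whether $|\bar{y}|>\eta_{n}$ or not (the second term being bounded via the model-selection-probability formula), then transfers the result to $\hat{\theta}_{S}$ through the identity $\hat{\theta}_{S}=\hat{\theta}_{H}-\operatorname{sign}(\hat{\theta}_{H})\eta_{n}$ and to $\hat{\theta}_{SCAD}$ through the sandwich inequality $\hat{\theta}_{S}\leq\hat{\theta}_{SCAD}\leq\hat{\theta}_{H}$ (for $\hat{\theta}_{S}\geq 0$, reversed otherwise). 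You instead prove a single clean lemma that all three estimators satisfy $|\hat{\theta}-\bar{y}|\leq\eta_{n}$ pathwise, which collapses the entire problem to a single Gaussian tail bound via the triangle inequality. Your approach is more uniform and arguably more transparent: it identifies the common structural reason why all three estimators have these properties (they are $\eta_{n}$-bounded perturbations of $\bar{y}$) rather than handling each estimator by its own device. The paper's approach has the small merit of exhibiting the explicit relations (\ref{hard-soft}) and (\ref{sandwich}), which are reused later in the paper, but as a self-contained proof of this theorem yours is the more economical one. One minor stylistic note: in the rate step you should make explicit (as you in fact do implicitly) that the bound $a_{n}|\hat{\theta}-\theta|\leq a_{n}\eta_{n}+a_{n}|\bar{y}-\theta|\leq 1+n^{1/2}|\bar{y}-\theta|$ uses both $a_{n}\eta_{n}\leq 1$ and $a_{n}\leq n^{1/2}$, which both hold by definition of $a_{n}$ as a minimum; this is fine as written.
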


\begin{proof}
We begin with proving uniform consistency of $\hat{\theta}=\hat{\theta}_{H}$%
. Observe that $\sup_{\theta \in \mathbb{R}}P_{n,\theta }(|\hat{\theta}%
_{H}-\theta |>\varepsilon )$ can be written as 
\begin{equation}
\begin{split}
& \sup_{\theta \in \mathbb{R}}P_{n,\theta }\Big(\Big\vert(\bar{y}-\theta )%
\boldsymbol{1}(\left\vert \bar{y}\right\vert >\eta _{n})-\theta \boldsymbol{1%
}(\left\vert \bar{y}\right\vert \leq \eta _{n})\Big\vert>\varepsilon \Big) \\
& \leq \quad \sup_{\theta \in \mathbb{R}}P_{n,\theta }(\left\vert \bar{y}%
-\theta \right\vert >\varepsilon /2,\;|\bar{y}|>\eta
_{n})\;\;+\;\;\sup_{\theta \in \mathbb{R}}P_{n,\theta }(\left\vert \theta
\right\vert >\varepsilon /2,\text{ }\left\vert \bar{y}\right\vert \leq \eta
_{n}) \\
& \leq \quad \Pr (\left\vert Z\right\vert >n^{1/2}\varepsilon
/2)\;\;+\;\;\sup_{\left\vert \theta \right\vert >\varepsilon /2}P_{n,\theta
}(\left\vert \bar{y}\right\vert \leq \eta _{n}),
\end{split}
\notag
\end{equation}%
where $Z$ is standard normally distributed. Now the first term on the far
r.h.s. in the above display obviously converges to zero exponentially fast
as $n\rightarrow \infty $. In the second term on the far right, the
probability gets large as $|\theta |$ gets close to $\varepsilon /2$.
Therefore, the second term on the far r.h.s. equals 
\begin{equation*}
\Pr \left( \left\vert Z+n^{1/2}\varepsilon /2\right\vert \leq n^{1/2}\eta
_{n}\right) =\Phi (n^{1/2}(-\varepsilon /2+\eta _{n}))-\Phi
(n^{1/2}(-\varepsilon /2-\eta _{n}))
\end{equation*}%
and also goes to zero exponentially fast because $\eta _{n}\rightarrow 0$.

Next, for the soft-thresholding estimator, observe that we have the relation%
\begin{equation}
\hat{\theta}_{S}=\hat{\theta}_{H}-\limfunc{sign}(\hat{\theta}_{H})\eta _{n}.
\label{hard-soft}
\end{equation}%
Consequently, 
\begin{equation*}
\sup_{\theta \in \mathbb{R}}P_{n,\theta }\left( \left\vert \hat{\theta}_{H}-%
\hat{\theta}_{S}\right\vert >\varepsilon \right) \leq \sup_{\theta \in 
\mathbb{R}}P_{n,\theta }(\eta _{n}>\varepsilon )=\boldsymbol{1}(\eta
_{n}>\varepsilon ),
\end{equation*}%
which equals zero for sufficiently large $n$. Hence, the results established
so far for $\hat{\theta}_{H}$ carry over to $\hat{\theta}_{S}$.

For the SCAD estimator observe that it is `sandwiched' between the other two
in the sense that 
\begin{equation}
\hat{\theta}_{S}\leq \hat{\theta}_{SCAD}\leq \hat{\theta}_{H}
\label{sandwich}
\end{equation}%
holds if $\hat{\theta}_{S}\geq 0$, and that the order is reversed if $\hat{%
\theta}_{S}\leq 0$. This entails the corresponding result for the SCAD\
estimator.

We next prove uniform $a_{n}$-consistency of $\hat{\theta}_{H}$: Repeating
the arguments from the beginning of the proof with $M/a_{n}$ replacing $%
\varepsilon $, we see that $\sup_{\theta \in \mathbb{R}}P_{n,\theta }(a_{n}|%
\hat{\theta}_{H}-\theta |>M)$ is bounded from above by 
\begin{equation*}
\Pr \left( \big\vert Z\big\vert>n^{1/2}M/(2a_{n})\right) \quad +\quad \Pr
\left( \big\vert Z+n^{1/2}M/(2a_{n})\big\vert\leq n^{1/2}\eta _{n}\right) .
\end{equation*}%
Because $n^{1/2}/a_{n}\geq 1$, the first term on the right-hand side of the
above expression is not larger than $\Pr (|Z|>M/2)$. The second term equals%
\begin{equation}
\begin{split}
& \Phi \left( -n^{1/2}M/(2a_{n})+n^{1/2}\eta _{n}\right) -\Phi \left(
-n^{1/2}M/(2a_{n})-n^{1/2}\eta _{n}\right) \\
& =\quad \Phi \left( (n^{1/2}/a_{n})(-M/2+a_{n}\eta _{n})\right) -\Phi
\left( (n^{1/2}/a_{n})(-M/2-a_{n}\eta _{n})\right) .
\end{split}
\notag
\end{equation}%
Note that $n^{1/2}/a_{n}\geq 1$ and $a_{n}\eta _{n}\leq 1$. For $M>2$, the
expression in the above display is therefore not larger than $\Phi (-M/2+1)$%
. Uniform $a_{n}$-consistency of $\hat{\theta}_{H}$ follows from this. The
proof for $\hat{\theta}_{S}$ and $\hat{\theta}_{SCAD}$ is then similar as
before.
\end{proof}

For the case where the estimators $\hat{\theta}_{H}$, $\hat{\theta}_{S}$,
and $\hat{\theta}_{SCAD}$ are tuned to perform conservative model selection,
the preceding theorem shows that these estimators are uniformly $n^{1/2}$%
-consistent. In contrast, in case the estimators are tuned to perform
consistent model selection, the theorem only guarantees uniform $\eta
_{n}^{-1}$-consistency; that the estimators do actually not converge faster
than $\eta _{n}$ in a uniform sense in this case will be shown in Section %
\ref{sec_consist}.

\begin{remark}
\normalfont\label{r1}Let $\hat{\theta}$ denote any one of the estimators $%
\hat{\theta}_{H}$, $\hat{\theta}_{S}$, or $\hat{\theta}_{SCAD}$. In case $%
n^{1/2}\eta _{n}\rightarrow e=0$ it is easy to see that $\hat{\theta}$ is
uniformly asymptotically equivalent to $\hat{\theta}_{U}=\bar{y}$ in the
sense that $\lim_{n\rightarrow \infty }\sup_{\theta \in \mathbb{R}%
}P_{n,\theta }\left( n^{1/2}\left\vert \hat{\theta}-\bar{y}\right\vert
>\varepsilon \right) =0$ for every $\varepsilon >0$. [For $\hat{\theta}=\hat{%
\theta}_{H}$, this follows easily from Proposition \ref{selection_prob}, for 
$\hat{\theta}=\hat{\theta}_{S}$ it follows then from (\ref{hard-soft}), and
for $\hat{\theta}=\hat{\theta}_{SCAD}$ from (\ref{sandwich}).]
\end{remark}

\section{The distributions of $\hat{\protect\theta}_{H}$, $\hat{\protect%
\theta}_{S}$, and $\hat{\protect\theta}_{SCAD}$}

\subsection{Finite-sample distributions\label{finite}}

For purpose of comparison we note the obvious fact that the distribution of
the unrestricted maximum likelihood estimator $\hat{\theta}_{U}=\bar{y}$\
(corresponding to model $M_{U}$) as well as the distribution of the
restricted maximum likelihood estimator $\hat{\theta}_{R}\equiv 0$
(corresponding to model $M_{R}$) are normal; more precisely, $n^{1/2}(\hat{%
\theta}_{U}-\theta )$ is $N(0,1)$-distributed and $n^{1/2}(\hat{\theta}%
_{R}-\theta )$ is $N(-n^{1/2}\theta ,0)$-distributed, where the singular
normal distribution is to be interpreted as pointmass at $-n^{1/2}\theta $.
For the hard-thresholding estimator, the finite-sample distribution $%
F_{H,n,\theta }$ of $n^{1/2}(\hat{\theta}_{H}-\theta )$ is of the form%
\begin{equation}
\begin{split}
dF_{H,n,\theta }(x)\quad =\quad & \left\{ \Phi (n^{1/2}(-\theta +\eta
_{n}))-\Phi (n^{1/2}(-\theta -\eta _{n}))\right\} d\delta _{-n^{1/2}\theta
}(x) \\
& \quad +\quad \phi (x)\;\boldsymbol{1}\left( \left\vert x+n^{1/2}\theta
\right\vert >n^{1/2}\eta _{n}\right) dx,
\end{split}
\label{distri_H}
\end{equation}%
where $\delta _{z}$ denotes pointmass at $z$ and $\phi $ denotes the
standard normal density. Relation (\ref{distri_H}) is most easily obtained
by writing $P_{n,\theta }(n^{1/2}(\hat{\theta}_{H}-\theta )\leq x)$ as the
sum of $P_{n,\theta }(n^{1/2}(\hat{\theta}_{H}-\theta )\leq x,\hat{\theta}%
_{H}=0)$ and $P_{n,\theta }(n^{1/2}(\hat{\theta}_{H}-\theta )\leq x,\hat{%
\theta}_{H}\neq 0)$. This also shows that the two terms in (\ref{distri_H})
correspond to the distribution of $n^{1/2}(\hat{\theta}_{H}-\theta )$
conditional on the events $\{\hat{\theta}_{H}=0\}$ and $\{\hat{\theta}%
_{H}\neq 0\}$, respectively, multiplied by the probability of the respective
events. Relation (\ref{distri_H}) also follows as a special case of Leeb and
P\"{o}tscher (2003), which provides the finite-sample as well as the
asymptotic distributions of a general class of post-model-selection
estimators. We recognize that the distribution of the hard-thresholding
estimator is a mixture of two components: The first one is a singular normal
distribution (i.e., pointmass) and coincides with the distribution of the
restricted maximum likelihood estimator. The second one is absolutely
continuous and represents an `excised' version of the normal distribution of
the unrestricted maximum likelihood estimator. Note that the absolutely
continuous part in (\ref{distri_H}) is bimodal and hence is distinctly
non-normal. The shape of the distribution of $n^{1/2}(\hat{\theta}%
_{H}-\theta )$ is exemplified in Figure~1.

\begin{center}
\begin{tabular}{c}
\pgfuseimage{figure1}%
\end{tabular}
\end{center}

\begin{quote}
\normalfont Figure~1: Distribution of $n^{1/2}(\hat{\theta}_{H}-\theta )$
for $n=40$, $\theta =0.16$, and $\eta _{n}=0.05$. The density of the
absolutely continuous part is shown by the solid curve, which is
discontinuous at $x=n^{1/2}(-\theta -\eta _{n})$ and $x=n^{1/2}(-\theta
+\eta _{n})$. [For better readability, the left- and right-hand limits at
discontinuity points are joined by line segments.] The vertical dotted line
indicates the location of the point-mass at $-n^{1/2}\theta $; the weight of
the point-mass, i.e., the multiplier of $d\delta _{-n^{1/2}\theta }(x)$ in (%
\ref{distri_H}), equals $0.15$. For other values of the constants involved
here, a similar picture is obtained.
\end{quote}

The finite-sample distribution $F_{S,n,\theta }$ of $n^{1/2}(\hat{\theta}%
_{S}-\theta )$ is given by%
\begin{equation}
\begin{split}
dF_{S,n,\theta }(x)\quad =\quad & \left\{ \Phi (n^{1/2}(-\theta +\eta
_{n}))-\Phi (n^{1/2}(-\theta -\eta _{n}))\right\} d\delta _{-n^{1/2}\theta
}(x) \\
& \;+\;\phi (x-n^{1/2}\eta _{n})\;\boldsymbol{1}(x+n^{1/2}\theta <0)\;dx \\
& \;+\;\phi (x+n^{1/2}\eta _{n})\;\boldsymbol{1}(x+n^{1/2}\theta >0)\;dx.
\end{split}
\label{distri_S}
\end{equation}%
For later use we note that this implies%
\begin{equation}
F_{S,n,\theta }(x)\quad =\quad \Phi (x+n^{1/2}\eta _{n}))\boldsymbol{1}%
(x\geq -n^{1/2}\theta )+\Phi (x-n^{1/2}\eta _{n}))\boldsymbol{1}%
(x<-n^{1/2}\theta ).  \label{distri_S_1}
\end{equation}%
Relation (\ref{distri_S}) is obtained from a derivation similar to that of (%
\ref{distri_H}), namely by representing $P_{n,\theta }(n^{1/2}(\hat{\theta}%
_{S}-\theta )\leq x)$ as the sum of $P_{n,\theta }(n^{1/2}(\hat{\theta}%
_{S}-\theta )\leq x,\hat{\theta}_{S}=0)$, $P_{n,\theta }(n^{1/2}(\hat{\theta}%
_{S}-\theta )\leq x,\hat{\theta}_{S}>0)$, and $P_{n,\theta }(n^{1/2}(\hat{%
\theta}_{S}-\theta )\leq x,\hat{\theta}_{S}<0)$. Similar to before, the
three terms in (\ref{distri_S}) correspond to the distributions of $n^{1/2}(%
\hat{\theta}_{S}-\theta )$ conditional on the events $\{\hat{\theta}_{S}=0\}$%
, $\{\hat{\theta}_{S}>0\}$, and $\{\hat{\theta}_{S}<0\}$, respectively,
multiplied by the respective probabilities of these events. The distribution
in (\ref{distri_S}) is again a mixture of a singular normal distribution and
of an absolutely continuous part, which is now the sum of two normal
densities, each with a truncated tail. Figure~2 exemplifies a typical shape
of this distribution.

\begin{center}
\begin{tabular}{c}
\pgfuseimage{figure2}%
\end{tabular}
\end{center}

\begin{quote}
\normalfont
Figure~2: Distribution of $n^{1/2}(\hat{\theta}_S - \theta)$. The choice of
constants and the interpretation of the image is the same as in Figure~1.
\end{quote}

The finite-sample distribution of the SCAD-estimator is obtained in a
similar vein: Decomposing the probability $P_{n,\theta }(n^{1/2}(\hat{\theta}%
_{SCAD}-\theta )\leq x)$ into a sum of seven terms by decomposing the
relevant event into its intersection with the events $\{\left\vert \bar{y}%
\right\vert \leq \eta _{n}\}$, $\{\eta _{n}<\bar{y}\leq 2\eta _{n}\}$, $%
\{2\eta _{n}<\bar{y}\leq a\eta _{n}\}$, $\{a\eta _{n}<\bar{y}\}$, $\{-2\eta
_{n}\leq \bar{y}<-\eta _{n}\}$, $\{-a\eta _{n}\leq \bar{y}<-2\eta _{n}\}$,
and $\{\bar{y}<-a\eta _{n}\}$, shows that the distribution $F_{SCAD,n,\theta
}$ of $n^{1/2}(\hat{\theta}_{SCAD}-\theta )$ is of the form%
\begin{equation}
\begin{split}
dF_{SCAD,n,\theta }& (x)\;=\;\Big\{\Phi (n^{1/2}(-\theta +\eta _{n}))-\Phi
(n^{1/2}(-\theta -\eta _{n}))\Big\}\;d\delta _{-n^{1/2}\theta }(x) \\
& +\;\Big\{f_{1}(x)+f_{2}(x)+f_{3}(x)+f_{-1}(x)+f_{-2}(x)+f_{-3}(x)\Big\}%
\;dx,
\end{split}
\label{distri_SCAD}
\end{equation}%
where 
\begin{eqnarray*}
f_{1}(x) &=&\phi \left( x+n^{1/2}\eta _{n}\right) \;\boldsymbol{1}\left(
0<x+n^{1/2}\theta \leq n^{1/2}\eta _{n}\right) , \\
f_{2}(x) &=&\frac{a-2}{a-1}\phi \left( \left\{ (a-2)x-n^{1/2}\theta
+an^{1/2}\eta _{n}\right\} /(a-1)\right) \times \\
\raisetag{2cm} &&\quad \boldsymbol{1}\left( n^{1/2}\eta _{n}<x+n^{1/2}\theta
\leq an^{1/2}\eta _{n}\right) , \\
f_{3}(x) &=&\phi \left( x\right) \;\boldsymbol{1}\left( x+n^{1/2}\theta
>n^{1/2}a\eta _{n}\right) ,
\end{eqnarray*}%
and where $f_{-1}(x)$, $f_{-2}(x)$, and $f_{-3}(x)$ are defined as $f_{1}(x)$%
, $f_{2}(x)$, and $f_{3}(x)$, respectively, but with $-x$ replacing $x$ and
with $-\theta $ replacing $\theta $ in the formulae. Like in the case of the
other estimators, the distribution of the SCAD-estimator is a mixture of a
singular normal distribution and an absolutely continuous part, the latter
being more complicated here as it is the sum of six pieces, each obtained
from normal distributions by truncation or excision. As shown in Figure~3,
the absolutely continuous part of $F_{SCAD,n,\theta }$ can be multimodal.

\begin{center}
\begin{tabular}{c}
\pgfuseimage{figure3}%
\end{tabular}
\end{center}

\begin{quote}
\normalfont Figure~3: Distribution of $n^{1/2}(\hat{\theta}_{SCAD}-\theta )$%
. The tuning-parameter $a$ is chosen as $a=3.7$ here, cf. Fan and Li (2001);
the choice of the other constants and the interpretation of the image is the
same as in Figure~1. The graph for the SCAD estimator coincides with that
for the soft-thresholding estimator inside a neighborhood of the location of
the atomic part at $-n^{1/2}\theta $ (vertical dotted line), and with that
for the hard-thresholding estimator outside of a (larger) neighborhood of $%
-n^{1/2}\theta $. The area between these two regions corresponds to the dips
shown in the figure.
\end{quote}

In summary, we see that the finite-sample distributions of the estimators $%
\hat{\theta}_{H}$, $\hat{\theta}_{S}$, and $\hat{\theta}_{SCAD}$ are
typically highly non-normal and can be multimodal. As a point of interest,
we also note that the computations leading to the above formulae also
deliver the conditional finite-sample distributions of the estimators $\hat{%
\theta}_{H}$, $\hat{\theta}_{S}$, and $\hat{\theta}_{SCAD}$, respectively,
conditional on selecting model $M_{R}$ or $M_{U}$. In particular, we note
that the conditional distribution of each of these estimators, conditional
on having selected the restricted model $M_{R}$, coincides with the
distribution of the restricted maximum likelihood estimator $\hat{\theta}%
_{R} $; in contrast, conditional on selecting the unrestricted model $M_{U}$%
, the conditional distribution is not identical to the distribution of the
unrestricted maximum likelihood estimator $\hat{\theta}_{U}$, but is more
complicated. This phenomenon applies also to large classes of
post-model-selection estimators; see P\"{o}tscher (1991) and Leeb and P\"{o}%
tscher (2003) for more discussion.

\subsection{Asymptotic distributions\label{asydistribs}}

We next obtain the asymptotic distributions of $\hat{\theta}_{H}$, $\hat{%
\theta}_{S}$, and $\hat{\theta}_{SCAD}$. We present the asymptotic
distributional results under general `moving parameter' asymptotics, where
the true parameter $\theta _{n}$ can depend on sample size, because
considering only fixed-parameter asymptotics may paint a quite misleading
picture of the behavior of the estimators (cf. Leeb and P\"{o}tscher (2003,
2005)). In fact, the results given below amount to a complete description of
all possible accumulation points of the finite-sample distributions of the
estimators in question, cf. Remarks \ref{rxx} and \ref{rxxx}. Not
surprisingly, the results in the conservative model selection case are
different from the ones in the consistent model selection case.

\subsubsection{Conservative case\label{sec_conserv}}

Here we characterize the large-sample behavior of the distributions of $\hat{%
\theta}_{H}$, $\hat{\theta}_{S}$, and $\hat{\theta}_{SCAD}$ for the case
where these estimators are tuned to perform conservative model selection.

\begin{theorem}
\label{H_conserv} Consider the hard-thresholding estimator with $\eta
_{n}\rightarrow 0$ and $n^{1/2}\eta _{n}\rightarrow e$, $0\leq e<\infty $.
Suppose the true parameter $\theta _{n}\in \mathbb{R}$ satisfies $%
n^{1/2}\theta _{n}\rightarrow \nu \in \mathbb{R}\cup \{-\infty ,\infty \}$.
Then $F_{H,n,\theta _{n}}$ converges weakly to the distribution given by%
\begin{equation}
\left\{ \Phi (-\nu +e)-\Phi (-\nu -e)\right\} \;d\delta _{-\nu }(x)
\;\;+\;\;\phi (x)\boldsymbol{1}(\left\vert x+\nu \right\vert >e)\;dx.
\label{asydistr_H_conserv}
\end{equation}%
[Note that (\ref{asydistr_H_conserv}) reduces to a standard normal
distribution in case $\left\vert \nu \right\vert =\infty$ or $e=0$.]
\end{theorem}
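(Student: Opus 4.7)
The plan is to start from the explicit formula (\ref{distri_H}) for $F_{H,n,\theta_n}$, recognizing that this distribution splits cleanly into an atomic part at $-n^{1/2}\theta_n$ with weight $w_n := \Phi(n^{1/2}(-\theta_n+\eta_n))-\Phi(n^{1/2}(-\theta_n-\eta_n))$, and an absolutely continuous part with density $g_n(x) := \phi(x)\mathbf{1}(|x+n^{1/2}\theta_n|>n^{1/2}\eta_n)$. The target limit (\ref{asydistr_H_conserv}) has the same structure, so I would show weak convergence by analyzing the two parts separately and then combining them.

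For the atomic part, Proposition~\ref{selection_prob}(i) (or direct substitution) gives $w_n \to w := \Phi(-\nu+e)-\Phi(-\nu-e)$; note that if $|\nu|=\infty$ both $\Phi$-terms tend to the same limit and $w=0$, so the point mass dissolves. When $\nu\in\mathbb{R}$, the location $-n^{1/2}\theta_n \to -\nu$ as well, so $w_n\delta_{-n^{1/2}\theta_n}$ converges weakly to $w\delta_{-\nu}$. When $|\nu|=\infty$, the point mass has vanishing weight and can be bounded in the weak-convergence inequalities by $|w_n|\,\|f\|_\infty$ for any bounded continuous test function $f$, giving the correct contribution $0$.

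For the absolutely continuous part, I would first observe that for every $x$ outside the (measure-zero) set $\{x:|x+\nu|=e\}$ the indicator $\mathbf{1}(|x+n^{1/2}\theta_n|>n^{1/2}\eta_n)$ converges to $\mathbf{1}(|x+\nu|>e)$, so $g_n(x)\to g(x):=\phi(x)\mathbf{1}(|x+\nu|>e)$ Lebesgue-a.e. (Here the case $|\nu|=\infty$ reduces to $g(x)=\phi(x)$ since $|x+n^{1/2}\theta_n|\to\infty$ while $n^{1/2}\eta_n$ stays bounded.) Next I would check that the total mass is matched: $\int g_n\,dx = 1-w_n \to 1-w = \int g\,dx$. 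Scheffé's lemma (in the form for nonnegative integrable functions whose integrals converge to the integral of the pointwise a.e.\ limit) then yields $\int|g_n-g|\,dx\to 0$, i.e.\ convergence in total variation of the absolutely continuous parts.

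Combining these two pieces is then routine: for any bounded continuous $f$,
\begin{equation*}
\int f\,dF_{H,n,\theta_n} = w_n f(-n^{1/2}\theta_n) + \int f(x)g_n(x)\,dx,
\end{equation*}
and each term converges to its counterpart in $\int f\,d F_\infty$ where $F_\infty$ is the measure in (\ref{asydistr_H_conserv}). This establishes weak convergence. I do not expect any serious obstacle; the only delicate point is bookkeeping the case $|\nu|=\infty$ (where the atomic mass disappears and the indicator degenerates to $1$) in parallel with the finite-$\nu$ case, which I would handle by a uniform argument based on the bounds $|w_n f(-n^{1/2}\theta_n)|\le \|f\|_\infty |w_n|$ and $|\int f(g_n-g)|\le \|f\|_\infty \int|g_n-g|$.
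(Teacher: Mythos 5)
Your proposal follows essentially the same route as the paper: decompose $F_{H,n,\theta_n}$ into its atomic part and its absolutely continuous part, establish convergence of the atomic weight (cf.\ Proposition~\ref{selection_prob}(i)), show Lebesgue-a.e.\ convergence of the densities and matching total masses, and then invoke Scheff\'e's lemma to obtain total variation convergence of the absolutely continuous part. The bookkeeping you do in the $|\nu|=\infty$ (and $e=0$) cases matches what the paper notes; there is no gap.
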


\begin{proof}
\footnote{%
Theorem \ref{H_conserv} is essentially a special case of results obtained in
Leeb and P\"{o}tscher (2003) for a more general class of
post-model-selection estimators. The proof of this result is included here
because of its brevity and illustrative value.} Recall that the
finite-sample distribution is given in (\ref{distri_H}). Convergence of the
weights $\Phi (n^{1/2}(-\theta +\eta _{n}))-\Phi (n^{1/2}(-\theta -\eta
_{n}))$ to $\Phi (-\nu +e)-\Phi (-\nu -e)$ is obvious (cf. proof of
Proposition 1). Hence, the atomic part of $F_{H,n,\theta _{n}}$ converges
weakly to the atomic part of (\ref{asydistr_H_conserv}) if $\left\vert \nu
\right\vert <\infty $ and $e>0$; if $\left\vert \nu \right\vert =\infty $ or
if $e=0$, the total mass of the atomic part converges to zero. The density
of the absolutely continuous part of $F_{H,n,\theta _{n}}$ is easily seen to
converge Lebesgue almost everywhere (in fact everywhere on $\mathbb{R}$
except possibly at $x=-\nu \pm e$) to the density of the absolutely
continuous part of (\ref{asydistr_H_conserv}). Also the total mass of the
absolutely continuous part is seen to converge to the total mass of the
absolutely continuous part of (\ref{asydistr_H_conserv}). By an application
of Scheff\'{e}'s Lemma, the densities converge in absolute mean, and hence
the absolutely continuous part converges in the total variation sense.
\end{proof}

The fixed-parameter asymptotic distribution is obtained from Theorem~\ref%
{H_conserv} by letting $\theta _{n}\equiv \theta $: If $\theta =0$, the
pointwise asymptotic distribution of the hard-thresholding estimator is seen
to be 
\begin{equation*}
\left\{ \Phi (e)-\Phi (-e)\right\} \;d\delta _{0}(x)\;\;+\;\;\phi (x)\;%
\boldsymbol{1}(\left\vert x\right\vert >e)\;dx,
\end{equation*}%
which coincides with the finite-sample distribution (\ref{distri_H}) in this
case except for replacing $n^{1/2}\eta _{n}$ by its limiting value $e$.
However, if $\theta \neq 0$, the pointwise asymptotic distribution is always
standard normal, which clearly misrepresents the actual distribution (\ref%
{distri_H}). This disagreement is most pronounced in the statistically
interesting case where $\theta $ is close to, but not equal to, zero (e.g., $%
\theta \sim n^{-1/2}$). In contrast, the distribution (\ref%
{asydistr_H_conserv}) much better captures the behavior of the finite-sample
distribution also in this case because (\ref{asydistr_H_conserv}) coincides
with (\ref{distri_H}) except for the fact that $n^{1/2}\eta _{n}$ and $%
n^{1/2}\theta _{n}$ have settled down to their limiting values.

\begin{theorem}
\label{S_conserv} Consider the soft-thresholding estimator with $\eta
_{n}\rightarrow 0$ and $n^{1/2}\eta _{n}\rightarrow e$, $0\leq e<\infty $.
Suppose the true parameter $\theta _{n}\in \mathbb{R}$ satisfies $%
n^{1/2}\theta _{n}\rightarrow \nu \in \mathbb{R}\cup \{-\infty ,\infty \}$.
Then $F_{S,n,\theta _{n}}$ converges weakly to the distribution given by%
\begin{equation}
\begin{split}
& \left\{ \Phi (-\nu +e)-\Phi (-\nu -e)\right\} \;d\delta _{-\nu }(x) \\
& \;+\;\;\left\{ \phi (x+e)\boldsymbol{1}(x>-\nu )\;+\;\phi (x-e)\boldsymbol{%
1}(x<-\nu )\right\} \;dx.
\end{split}
\label{asydistr_S_conserv}
\end{equation}%
[Note that (\ref{asydistr_S_conserv}) reduces to a $N(-\limfunc{sign}(\nu
)e,1)$-distribution in case $\left\vert \nu \right\vert =\infty $ or $e=0$.]
\end{theorem}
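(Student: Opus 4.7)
The plan is to mimic the proof of Theorem \ref{H_conserv}: start from the explicit finite-sample formula (\ref{distri_S}) for $F_{S,n,\theta_n}$, decompose it into the atomic part at $-n^{1/2}\theta_n$ and the absolutely continuous piece, handle each separately, and recombine.

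For the atomic part, the weight equals $P_{n,\theta_n}(\hat{\theta}_S=0)$, which by Proposition \ref{selection_prob}(i) converges to $\Phi(-\nu+e)-\Phi(-\nu-e)$. When $|\nu|<\infty$ the point-mass location $-n^{1/2}\theta_n\to -\nu$, so the atomic part converges weakly to $\{\Phi(-\nu+e)-\Phi(-\nu-e)\}\,\delta_{-\nu}$. When $|\nu|=\infty$ the limiting weight equals zero and the atom drifts off to $\mp\infty$; since $|f(-n^{1/2}\theta_n)|\le\|f\|_\infty$ for any bounded continuous $f$, the atomic contribution to $\int f\,dF_{S,n,\theta_n}$ vanishes. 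Thus in all cases the atomic part converges weakly to the atomic part of (\ref{asydistr_S_conserv}).

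For the absolutely continuous part, write its density as
\begin{equation*}
g_n(x)=\phi(x-n^{1/2}\eta_n)\,\boldsymbol{1}(x+n^{1/2}\theta_n<0)+\phi(x+n^{1/2}\eta_n)\,\boldsymbol{1}(x+n^{1/2}\theta_n>0).
\end{equation*}
Since $n^{1/2}\eta_n\to e$ and $n^{1/2}\theta_n\to\nu$, at every $x\ne -\nu$ one has $g_n(x)\to g(x):=\phi(x-e)\boldsymbol{1}(x<-\nu)+\phi(x+e)\boldsymbol{1}(x>-\nu)$, which is the density of the absolutely continuous piece of (\ref{asydistr_S_conserv}); when $|\nu|=\infty$ exactly one of the two indicators tends to $1$ and the other to $0$ pointwise, recovering the $N(-\limfunc{sign}(\nu)e,1)$ density, and when $e=0$ the two summands agree (for $x\ne -\nu$) and reproduce $\phi(x)$. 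The total mass satisfies $\int g_n\,dx=1-P_{n,\theta_n}(\hat{\theta}_S=0)$, which converges to $1-[\Phi(-\nu+e)-\Phi(-\nu-e)]=\int g\,dx$, so Scheffé's Lemma yields $\|g_n-g\|_{L^1}\to 0$, i.e., convergence of the absolutely continuous part in total variation.

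Adding the weakly convergent atomic part and the total-variation convergent absolutely continuous part gives weak convergence of $F_{S,n,\theta_n}$ to the distribution in (\ref{asydistr_S_conserv}); the reductions claimed in the parenthetical remark of the theorem then follow automatically from the case analysis above. The only delicate bookkeeping item is the case $|\nu|=\infty$, where the atom drifts to infinity while losing mass and one must check separately that its contribution to integrals against bounded continuous test functions is negligible. Everything else reduces to Scheffé applied to the explicit density in (\ref{distri_S}), together with continuity of $\Phi$ and $\phi$.
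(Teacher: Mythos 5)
Your proof is correct and is exactly the argument the paper has in mind: the paper itself simply states that the proof is ``completely analogous to the proof of Theorem~\ref{H_conserv},'' and you have carried out that analogy faithfully — identify the atomic weight via Proposition~\ref{selection_prob}(i), track the atom's location (including the $|\nu|=\infty$ case where the mass vanishes as the atom escapes), establish pointwise a.e.\ convergence of the absolutely continuous density, match the total masses, and invoke Scheff\'e to upgrade to $L^1$ (hence total-variation) convergence of the continuous part.
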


The proof is completely analogous to the proof of Theorem \ref{H_conserv}.
Since soft-thresholding arises as a special case of the LASSO-estimator, the
above result is closely related to the results in Knight and Fu (2000).%
\footnote{%
Since Knight and Fu (2000) consider the LASSO-estimator in a linear
regression model without an intercept, their results do not directly apply
to the model considered here. However, their results can easily be modified
to also cover linear regression with an intercept.} Similar to the case of
hard-thresholding, a fixed-parameter asymptotic analysis only partially
reflects the finite-sample behavior of the estimator: In case $\theta =0$,
the pointwise asymptotic distribution is 
\begin{equation*}
\left\{ \Phi (e)-\Phi (-e)\right\} \;d\delta _{0}(x)\;\;+\;\;\left\{ \phi
(x-e)\boldsymbol{1}(x<0)\;+\;\phi (x+e)\boldsymbol{1}(x>0)\right\} \;dx.
\end{equation*}%
However, if $\theta \neq 0$, the pointwise limit distribution is $N(-%
\limfunc{sign}(\theta )e,1)$, which is not in good agreement with the
finite-sample distribution (\ref{distri_S}), especially in the statistically
interesting case where $\theta $ is close to, but not equal to, zero (e.g., $%
\theta \sim n^{-1/2}$). In contrast, (\ref{asydistr_S_conserv}) is in better
agreement with (\ref{distri_S}) also in this case in the sense that (\ref%
{asydistr_S_conserv}) coincides with (\ref{distri_S}), except that $%
n^{1/2}\eta _{n}$ and $n^{1/2}\theta _{n}$ have settled down to their
limiting values.

\begin{theorem}
\label{SCAD_conser} Consider the SCAD estimator with $\eta _{n}\rightarrow 0$
and $n^{1/2}\eta _{n}\rightarrow e$, $0\leq e<\infty $. Suppose the true
parameter $\theta _{n}\in \mathbb{R}$ satisfies $n^{1/2}\theta
_{n}\rightarrow \nu \in \mathbb{R}\cup \{-\infty ,\infty \}$. Then $%
F_{SCAD,n,\theta _{n}}$ converges weakly to the distribution given by%
\begin{equation}
\begin{split}
& \left\{ \Phi (-\nu +e)-\Phi (-\nu -e)\right\} \;d\delta _{-\nu }(x) \\
& +\;\;\Big\{\phi (x+e)\boldsymbol{1}(0<x+\nu \leq e)+\phi (x-e)\boldsymbol{1%
}(-e\leq x+\nu <0) \\
& \qquad +\;\;\frac{a-2}{a-1}\phi (\left\{ (a-2)x-\nu +ae\right\} /(a-1))%
\boldsymbol{1}(e<x+\nu \leq ae) \\
& \qquad +\;\;\frac{a-2}{a-1}\phi (\left\{ (a-2)x-\nu -ae\right\} /(a-1))%
\boldsymbol{1}(-ae\leq x+\nu <-e) \\
& \qquad +\;\;\phi (x)\boldsymbol{1}(\left\vert x+\nu \right\vert >ae)\Big\}%
\;dx.
\end{split}
\label{asydistr_SCAD_conserv}
\end{equation}%
[Note that (\ref{asydistr_SCAD_conserv}) reduces to a standard normal
distribution in case $\left\vert \nu \right\vert =\infty $ or $e=0$.]
\end{theorem}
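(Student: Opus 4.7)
The plan is to mirror the proof of Theorem~\ref{H_conserv}, starting from the explicit finite-sample formula~(\ref{distri_SCAD}). Decompose $F_{SCAD,n,\theta_n}$ into its atomic part at $-n^{1/2}\theta_n$ and its absolutely continuous part with density $g_n(x) = f_1(x)+f_2(x)+f_3(x)+f_{-1}(x)+f_{-2}(x)+f_{-3}(x)$, and treat each part separately.

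First, the weight of the atomic part, $\Phi(n^{1/2}(-\theta_n+\eta_n))-\Phi(n^{1/2}(-\theta_n-\eta_n))$, converges to $\Phi(-\nu+e)-\Phi(-\nu-e)$ exactly as in the proof of Proposition~\ref{selection_prob}; if $|\nu|<\infty$ the pointmass itself also converges to $\delta_{-\nu}$, while if $|\nu|=\infty$ the whole atomic mass tends to zero. This handles the atomic part.

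Next, I would verify pointwise convergence of $g_n(x)$ to the density of the absolutely continuous part of~(\ref{asydistr_SCAD_conserv}) at every $x \in \mathbb{R}$ except possibly at the (at most four) boundary points $x = -\nu,\; -\nu\pm e,\; -\nu\pm ae$. This is a routine substitution: for instance, in the defining formula for $f_1(x)$, the argument $x+n^{1/2}\eta_n$ of $\phi$ tends to $x+e$, and the indicator $\mathbf{1}(0<x+n^{1/2}\theta_n\leq n^{1/2}\eta_n)$ tends to $\mathbf{1}(0<x+\nu\leq e)$ off the boundary; the analogous check for $f_2$, $f_3$ and their mirror images is equally direct. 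Once pointwise convergence is established, I would compute the total mass of the limiting absolutely continuous part and verify that it equals $1-[\Phi(-\nu+e)-\Phi(-\nu-e)]$, which is the limit of the total mass of $g_n$. Scheff\'{e}'s Lemma then upgrades almost-everywhere convergence of the densities to convergence in total variation of the absolutely continuous parts, and combining with the atomic convergence yields weak convergence of $F_{SCAD,n,\theta_n}$ to~(\ref{asydistr_SCAD_conserv}).

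The main technical nuisance, and the part requiring the most care, is the degenerate regime $|\nu|=\infty$ or $e=0$. If $\nu=+\infty$, for any fixed $x$ the indicators in $f_1,f_2,f_{-1},f_{-2},f_{-3}$ eventually vanish, while $f_3(x)\to\phi(x)$ because $x+n^{1/2}\theta_n>n^{1/2}a\eta_n$ holds for all large $n$; the case $\nu=-\infty$ is symmetric, and the limit density is $\phi$ in both cases, matching the claimed reduction to $N(0,1)$. If $e=0$ and $|\nu|<\infty$, the middle four pieces have indicator sets of Lebesgue measure tending to zero and contribute nothing in the limit, whereas $f_3(x)+f_{-3}(x)\to\phi(x)\{\mathbf{1}(x+\nu>0)+\mathbf{1}(x+\nu<0)\} = \phi(x)$ almost everywhere. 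Total mass agreement has to be checked separately in each of these degenerate subcases so that the application of Scheff\'{e}'s Lemma remains valid; once this bookkeeping is done, the argument closes exactly as in the nondegenerate case.
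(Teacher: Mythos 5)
Your proposal is correct and follows exactly the approach the paper intends: the paper itself states only that the proof is ``completely analogous to that of Theorem~\ref{H_conserv},'' and your argument reproduces that analogy faithfully (decompose into atomic and absolutely continuous parts, show a.e.\ convergence of the six-piece density to the limit density, verify agreement of total masses, and invoke Scheff\'{e}'s Lemma), including a careful treatment of the degenerate regimes $|\nu|=\infty$ and $e=0$.
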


The proof of Theorem~\ref{SCAD_conser} is again completely analogous to that
of Theorem \ref{H_conserv}. As with the hard- and soft-thresholding
estimators discussed before, a fixed-parameter asymptotic analysis of the
SCAD estimator only partially reflects its finite-sample behavior: In case $%
\theta =0$, the pointwise asymptotic distribution is given by (\ref%
{asydistr_SCAD_conserv}) with $\nu =0$, but in case $\theta \neq 0$ it is
given by $N(0,1)$, which is definitely not in good agreement with the
finite-sample distribution (\ref{distri_SCAD}), especially in the
statistically interesting case where $\theta $ is different from, but close
to, zero, e.g., $\theta \sim n^{-1/2}$. In contrast, (\ref%
{asydistr_SCAD_conserv}) is in much better agreement with (\ref{distri_SCAD}%
) in view of the fact that (\ref{asydistr_SCAD_conserv}) coincides with (\ref%
{distri_SCAD}), except that $n^{1/2}\eta _{n}$ and $n^{1/2}\theta _{n}$ have
settled down to their limiting values.

We note that the mathematical reason for the failure of the pointwise
asymptotic distribution to capture the behavior of the finite-sample
distribution well is that the convergence of the latter to the former is not
uniform in the underlying parameter $\theta $. See Leeb and P\"{o}tscher
(2003, 2005) for more discussion in the context of post-model-selection
estimators.

\begin{remark}
\normalfont\label{rx} If $\left\vert \nu \right\vert =\infty $, or $e=0$, or 
$n^{1/2}\theta _{n}=\nu $ does not depend on $n$, the convergence in the
above three theorems is even in the total variation distance. In the first
two cases this follows because the total mass of the atomic part converges
to zero; in the third case it follows because the location of the pointmass
is independent of $n$.
\end{remark}

\begin{remark}
\normalfont\label{rxx} The above theorems actually completely describe the
limiting behavior of the finite-sample distributions of $\hat{\theta}_{H}$, $%
\hat{\theta}_{S}$, and $\hat{\theta}_{SCAD}$ without \emph{any }condition on
the sequence of parameters $\theta _{n}$. To see this, just apply the
theorems to subsequences and note that by compactness of ${\mathbb{R}}\cup
\{-\infty ,\infty \}$ we can select from every subsequence a further
subsequence such that $n^{1/2}\theta _{n}$ converges in ${\mathbb{R}}\cup
\{-\infty ,\infty \}$ along this further subsequence.
\end{remark}

\subsubsection{Consistent case\label{sec_consist}}

In the case where the estimators $\hat{\theta}_{H}$, $\hat{\theta}_{S}$, and 
$\hat{\theta}_{SCAD}$ are tuned to perform consistent model selection (i.e., 
$\eta _{n}\rightarrow 0$ and $n^{1/2}\eta _{n}\rightarrow \infty $), the 
\emph{fixed-parameter} limiting behavior of the finite-sample distributions
is particularly simple: The finite-sample distribution of the
hard-thresholding estimator converges to the $N(0,0)$-distribution (i.e., to
pointmass at $0$) if $\theta =0$, and to the $N(0,1)$-distribution if $%
\theta \neq 0$; cf. Lemma~1 in P\"{o}tscher (1991). In other words, the
pointwise asymptotic distribution of $n^{1/2}(\hat{\theta}_{H}-\theta )$
coincides with the asymptotic distribution of the restricted maximum
likelihood estimator if $\theta =0$, and coincides with the asymptotic
distribution of the unrestricted maximum likelihood estimator if $\theta
\neq 0$. The hard-thresholding estimator, when tuned in this way, therefore
satisfies what has sometimes been dubbed the `oracle' property in the
literature.\footnote{%
This does not come as a surprise, since post-model-selection estimators
based on a consistent model selection procedure in general satisfy the
`oracle' property as already noted in Lemma 1 of P\"{o}tscher (1991); but
see also the warning issued in the discussion following that lemma.} The
SCAD-estimator with the same tuning is also known to possess the `oracle'
property; cf. Fan and Li (2001). With the same tuning, the soft-thresholding
has a somewhat different pointwise asymptotic behavior which is discussed
later.

The `oracle' property of the hard-thresholding estimator and the
SCAD-estimator implies in particular that both estimators are $n^{1/2}$%
-consistent. In Theorem \ref{unif_cons}, however, we have -- in contrast to
the conservative model selection case -- only been able to establish uniform 
$\eta _{n}^{-1}$-consistency and not uniform $n^{1/2}$-consistency. This
begs the question whether Theorem \ref{unif_cons} is just not sharp enough
or whether the estimators actually are not uniformly $n^{1/2}$-consistent.
It furthermore raises the question of the behavior of the finite-sample
distributions of $n^{1/2}(\hat{\theta}_{H}-\theta )$, $n^{1/2}(\hat{\theta}%
_{S}-\theta )$, and $n^{1/2}(\hat{\theta}_{SCAD}-\theta )$ in a `uniform'
asymptotic framework. The three results that follow answer this by
determining the limits of the finite-sample distributions of $\hat{\theta}%
_{H}$, $\hat{\theta}_{S}$, and $\hat{\theta}_{SCAD}$ under general `moving
parameter' asymptotics when the estimators are tuned to perform consistent
model selection.

\begin{theorem}
\label{H_consist} Consider the hard-thresholding estimator with $\eta
_{n}\rightarrow 0$ and $n^{1/2}\eta _{n}\rightarrow \infty $. Assume that $%
\theta _{n}/\eta _{n}\rightarrow \zeta $ for some $\zeta \in {\mathbb{R}}%
\cup \{-\infty ,\infty \}$ and that $n^{1/2}\theta _{n}\rightarrow \nu $ for
some $\nu \in {\mathbb{R}}\cup \{-\infty ,\infty \}$. [Note that in case $%
\zeta \neq 0$ the convergence of $n^{1/2}\theta _{n}$ already follows from
that of $\theta _{n}/\eta _{n}$, and $\nu $ is then given by $\nu =\limfunc{%
sign}(\zeta )\infty $.]

\begin{enumerate}
\item If $|\zeta |<1$, then $F_{H,n,\theta _{n}}$ approaches pointmass at $%
-\nu $. In case $|\nu |<\infty $, this means that $F_{H,n,\theta _{n}}$
converges weakly to pointmass at $-\nu $; in case $|\nu |=\infty $, this
means that the total mass of $F_{H,n,\theta _{n}}$ escapes to $-\nu $, in
the sense that $F_{H,n,\theta _{n}}(x)\rightarrow 0$ for every $x\in \mathbb{%
R}$ if $-\nu =\infty $, and $F_{H,n,\theta _{n}}(x)\rightarrow 1$ for every $%
x\in \mathbb{R}$ if $-\nu =-\infty $.

\item If $|\zeta |=1$ and $n^{1/2}(\eta _{n}-\zeta \theta _{n})\rightarrow r$
for some $r\in \mathbb{R\cup \{-\infty },\mathbb{\infty \}}$, then $%
F_{H,n,\theta _{n}}(x)$ converges to%
\begin{equation*}
\Phi (r)\boldsymbol{1}(\zeta =1)\;+\;\int_{-\infty }^{x}\phi (u)\boldsymbol{1%
}(\zeta u>r)du
\end{equation*}%
for every $x\in \mathbb{R}$. This limit corresponds to pointmass at $-\nu =%
\limfunc{sign}(-\zeta )\infty $ if $r=\infty $, and otherwise represents a
convex combination of pointmass at $-\nu =\limfunc{sign}(-\zeta )\infty $
and an absolutely continuous distribution whose density, a kind of truncated
standard normal, is given by $(1-\Phi (r))^{-1}$ times the integrand in the
above formula; the weights in that convex combination are given by $\Phi (r)$
and $(1-\Phi (r))$, respectively. [The weight of the absolutely continuous
component equals one in case $r=-\infty $; in this case, convergence is in
fact in total variation distance.]

\item If $1<\left\vert \zeta \right\vert \leq \infty $, then $F_{H,n,\theta
_{n}}$ converges weakly to $\Phi $, the standard normal cdf. [In fact,
convergence is in total variation distance.]
\end{enumerate}
\end{theorem}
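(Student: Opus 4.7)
The plan is to read all three cases directly off the finite-sample formula (\ref{distri_H}) combined with Proposition \ref{selection_prob}, treating the atomic part at $-n^{1/2}\theta_n$ and the absolutely continuous part with indicator $\mathbf{1}(|x+n^{1/2}\theta_n|>n^{1/2}\eta_n)$ separately, and assembling the cdf $F_{H,n,\theta_n}(x)$ at the end.

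For part 1 ($|\zeta|<1$), Proposition \ref{selection_prob}(ii).1 gives $P_{n,\theta_n}(\hat{\theta}=0)\to 1$, so the atomic component carries all the mass asymptotically. Its location is $-n^{1/2}\theta_n\to -\nu$. When $|\nu|<\infty$ this yields weak convergence to $\delta_{-\nu}$; when $\nu=+\infty$ (resp.\ $-\infty$) the atom escapes to $-\infty$ (resp.\ $+\infty$), which by direct inspection of the cdf translates into the claimed pointwise limits $F_{H,n,\theta_n}(x)\to 1$ or $0$. One has to check that the absolutely continuous part contributes nothing in the limit, which follows from the total mass of the density being $1-P_{n,\theta_n}(\hat{\theta}=0)\to 0$.

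For part 2 ($|\zeta|=1$), I would argue case $\zeta=1$ and then note that $\zeta=-1$ is symmetric. By Proposition \ref{selection_prob}(ii).2 the atomic weight converges to $\Phi(r)$, and since $n^{1/2}\theta_n = n^{1/2}\eta_n - n^{1/2}(\eta_n-\theta_n)\to\infty$ the atom at $-n^{1/2}\theta_n$ escapes to $-\infty$, contributing its full weight $\Phi(r)$ to $F_{H,n,\theta_n}(x)$ for every fixed $x$. For the density I analyze the indicator: $x+n^{1/2}\theta_n>n^{1/2}\eta_n$ reduces to $x>n^{1/2}(\eta_n-\theta_n)\to r$, while $x+n^{1/2}\theta_n<-n^{1/2}\eta_n$ reduces to $x<-n^{1/2}(\eta_n+\theta_n)\to -\infty$ and so never holds in the limit. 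Hence the indicator converges pointwise (except possibly at $x=r$) to $\mathbf{1}(x>r)=\mathbf{1}(\zeta x>r)$, and dominated convergence gives the integrated form. The identification of the limit as a convex combination with weights $\Phi(r)$ and $1-\Phi(r)$ is then algebra; the special cases $r=\pm\infty$ follow by sending $\Phi(r)$ to $1$ or $0$, with total-variation convergence in the case $r=-\infty$ coming from Scheff\'e's lemma because then the atomic mass vanishes.

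For part 3 ($|\zeta|>1$), Proposition \ref{selection_prob}(ii).3 gives $P_{n,\theta_n}(\hat{\theta}=0)\to 0$, so the atom disappears. For the density, $|n^{1/2}\theta_n|-n^{1/2}\eta_n=n^{1/2}\eta_n(|\theta_n|/\eta_n-1)\to\infty$, so for every fixed $x$ one has $|x+n^{1/2}\theta_n|>n^{1/2}\eta_n$ eventually, and the excised density $\phi(x)\mathbf{1}(|x+n^{1/2}\theta_n|>n^{1/2}\eta_n)$ converges pointwise to $\phi(x)$; since total masses also converge (to $1$), Scheff\'e's lemma upgrades this to total variation convergence.

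The only real subtlety lies in part 2, where the atom weight and the truncation threshold both have finite nontrivial limits that must be tracked simultaneously along the same rate $n^{1/2}(\eta_n-\zeta\theta_n)\to r$; keeping the bookkeeping between the location $-n^{1/2}\theta_n$ of the escaping atom and the cutoff $r$ of the surviving truncated-normal piece is where care is needed. The remainder is just routine continuity of $\Phi$ and dominated/Scheff\'e convergence.
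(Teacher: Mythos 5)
Your proposal is correct and follows essentially the same approach as the paper's proof: decompose $F_{H,n,\theta_n}$ via the finite-sample formula (\ref{distri_H}) into its atomic and absolutely continuous parts, control the atomic weight and its escaping location via Proposition \ref{selection_prob}, track the excision-interval boundaries $\pm n^{1/2}(\eta_n\mp\theta_n)$, and upgrade pointwise convergence of the density via dominated convergence and Scheff\'e. The only cosmetic difference is in part 3, where you argue $|n^{1/2}\theta_n|-n^{1/2}\eta_n\to\infty$ directly rather than observing that both excision endpoints diverge to the same sign of infinity — the same fact phrased differently.
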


\begin{proof}
Proposition \ref{selection_prob} shows that the total mass of the atomic
part of $F_{H,n,\theta _{n}}$ converges to one under the conditions of
part~1. Because the atomic part is located at $-n^{1/2}\theta_n$ in view of (%
\ref{distri_H}), part~1 follows immediately.

For part 2, assume first that $\zeta =1$. Proposition \ref{selection_prob}
shows that the total mass of the atomic part of $F_{H,n,\theta _{n}}$
converges to $\Phi (r)$. Furthermore, $n^{1/2}\theta _{n}\rightarrow \infty $
certainly holds, which implies that the atomic part escapes to $-\infty $.
If $r=\infty $, we are hence done. Suppose now that $r<\infty $. In (\ref%
{distri_H}), the boundaries of the `excision interval' of the absolutely
continuous part of $F_{H,n,\theta _{n}}$, i.e., $-n^{1/2}(\eta _{n}+\theta
_{n})=-n^{1/2}\eta _{n}(1+\theta _{n}/\eta _{n})$ and $n^{1/2}(\eta
_{n}-\theta _{n})$ then converge to $-\infty $ and $r$, respectively. This
shows that 
\begin{equation*}
\phi (x)\;\boldsymbol{1}(|x+n^{1/2}\theta _{n}|>n^{1/2}\eta _{n})\quad
\rightarrow \quad \phi (x)\;\boldsymbol{1}(x>r)
\end{equation*}%
for Lebesgue almost every $x\in \mathbb{R}$. The Dominated Convergence
Theorem then shows that the convergence in the above display also holds in
absolute mean. This completes the proof of part 2 in case $\zeta =1$. The
case where $\zeta =-1$ is treated similarly.

Under the conditions of part 3, Proposition \ref{selection_prob} shows that
the total mass of the absolutely continuous part converges to one.
Furthermore, the boundaries of the `excision interval' in (\ref{distri_H}),
i.e., $-n^{1/2}(\eta _{n}+\theta _{n})=-n^{1/2}\eta _{n}(1+\theta _{n}/\eta
_{n})$ and $n^{1/2}(\eta _{n}-\theta _{n})=n^{1/2}\eta _{n}(1-\theta
_{n}/\eta _{n})$, diverge either both to $\infty $ or both to $-\infty $,
because $|\zeta |>1$. This implies that%
\begin{equation*}
\phi (x)\;\boldsymbol{1}(|x+n^{1/2}\theta _{n}|>n^{1/2}\eta _{n})\quad
\rightarrow \quad \phi (x)
\end{equation*}%
for every $x\in \mathbb{R}$. Together with the Dominated Convergence Theorem
this completes the proof.
\end{proof}

The fixed-parameter asymptotic behavior of the hard-thresholding estimator
discussed earlier, including the `oracle' property, can clearly be recovered
from the above theorem by setting $\theta _{n}\equiv \theta $. However, the
theorem shows that the asymptotic behavior of the hard-thresholding
estimator is more complicated than what the `oracle' property predicts. In
particular, the theorem shows that the hard-thresholding estimator is not
uniformly $n^{1/2}$-consistent as the sequence of finite-sample
distributions is not stochastically bounded in all cases. [In that sense
scaling by $n^{1/2}$ does not appear to be the natural thing to do, see the
discussion below as well as the appendix.] Furthermore, as shown by (\ref%
{distri_H}), the finite-sample distribution is highly non-normal, whereas
the \textit{pointwise} asymptotic distribution is always normal and thus can
not capture essential features of the finite-sample distribution. In
contrast, the asymptotic distribution given in Theorem \ref{H_consist} is
also non-normal in some cases. All this goes to show that the `oracle'
property, which is based on the pointwise asymptotic distribution only,
paints a highly misleading picture of the behavior of the hard-thresholding
estimator and should not be taken at face value.\footnote{%
This is of course not new and has been observed more than 50 years ago in
the context of Hodges' estimator. For more discussion of the problematic
nature of the `oracle' property see Leeb and P\"{o}tscher (2008a).} A result
for a certain post-model-selection estimator that is related to Theorem \ref%
{H_consist} above can be found in Appendix A of Leeb and P\"{o}tscher (2005).

\begin{theorem}
\label{S_consist} Consider the soft-thresholding estimator with $\eta
_{n}\rightarrow 0$ and $n^{1/2}\eta _{n}\rightarrow \infty $. Assume that $%
n^{1/2}\theta _{n}\rightarrow \nu \in \mathbb{R}\cup \{-\infty ,\infty \}$.
Then $F_{S,n,\theta _{n}}$ approaches pointmass at $-\nu $. In case $|\nu
|<\infty $, this means that $F_{S,n,\theta _{n}}$ converges weakly to
pointmass at $-\nu $; in case $|\nu |=\infty $, it means that the total mass
of $F_{S,n,\theta _{n}}$ escapes to $-\nu $, in the sense that $%
F_{S,n,\theta _{n}}(x)\rightarrow 0$ for every $x\in \mathbb{R}$ if $-\nu
=\infty $, and $F_{S,n,\theta _{n}}(x)\rightarrow 1$ for every $x\in \mathbb{%
R}$ if $-\nu =-\infty $.
\end{theorem}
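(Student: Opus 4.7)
The plan is to read the conclusion directly off the closed-form expression for $F_{S,n,\theta_n}(x)$ in equation (\ref{distri_S_1}), together with the two hypotheses $n^{1/2}\eta_n \to \infty$ and $n^{1/2}\theta_n \to \nu$. That formula partitions the real line at the point $-n^{1/2}\theta_n$: for $x \geq -n^{1/2}\theta_n$ the cdf equals $\Phi(x + n^{1/2}\eta_n)$, and for $x < -n^{1/2}\theta_n$ it equals $\Phi(x - n^{1/2}\eta_n)$. Since $n^{1/2}\eta_n \to \infty$, each of these $\Phi$-values tends to either $0$ or $1$; which one occurs is dictated entirely by whether the fixed $x$ sits to the right or to the left of the moving point $-n^{1/2}\theta_n$, and that in turn is governed by the limit $-\nu$.

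Concretely, I would fix $x \in \mathbb{R}$ and split into the three cases in the statement. If $|\nu| < \infty$, the convergence is weak to $\delta_{-\nu}$, and it suffices to verify the pointwise limit at continuity points $x \neq -\nu$. For $x > -\nu$, the relation $-n^{1/2}\theta_n \to -\nu < x$ implies $x \geq -n^{1/2}\theta_n$ for all large $n$, so $F_{S,n,\theta_n}(x) = \Phi(x + n^{1/2}\eta_n) \to 1$; for $x < -\nu$ the reverse inequality eventually holds and $F_{S,n,\theta_n}(x) = \Phi(x - n^{1/2}\eta_n) \to 0$. If $\nu = -\infty$ (so $-\nu = \infty$), then $-n^{1/2}\theta_n \to \infty$, so for every fixed $x$ eventually $x < -n^{1/2}\theta_n$ and $F_{S,n,\theta_n}(x) = \Phi(x - n^{1/2}\eta_n) \to 0$. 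If $\nu = \infty$, then symmetrically $F_{S,n,\theta_n}(x) = \Phi(x + n^{1/2}\eta_n) \to 1$ for every $x$.

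Each of these limits matches exactly the description in the theorem (weak convergence to $\delta_{-\nu}$ when $|\nu|<\infty$, and the described escape of total mass when $|\nu|=\infty$). There is no substantive obstacle: the entire content of the argument is that (\ref{distri_S_1}) has already collapsed the distribution into a piecewise shifted normal cdf, and the two limit hypotheses force the shift and the splitting point to behave in a way that drives the cdf to the claimed step function. The only small care is to avoid the point $x = -\nu$ when $|\nu|<\infty$, since that is the one discontinuity of the limit cdf, but this is precisely what the weak-convergence formulation permits.
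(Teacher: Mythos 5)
Your proof is correct and uses essentially the same argument as the paper: read off the conclusion from the piecewise closed-form in (\ref{distri_S_1}), noting that $n^{1/2}\eta_n \to \infty$ forces the two branches to $1$ and $0$, and that the hypothesis $n^{1/2}\theta_n \to \nu$ determines which branch is eventually active at each fixed $x$. The paper just states this more tersely; your more explicit case split adds no new idea but is a fully correct unpacking.
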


\begin{proof}
From (\ref{distri_S_1}) we have that $F_{S,n,\theta _{n}}(x)=\Phi
(x+n^{1/2}\eta _{n})$ for $x>-n^{1/2}\theta _{n}$ and $F_{S,n,\theta
_{n}}(x)=\Phi (x-n^{1/2}\eta _{n})$ for $x<-n^{1/2}\theta _{n}$. Because $%
n^{1/2}\eta _{n}\rightarrow \infty $, this entails that $F_{S,n,\theta
_{n}}(x)$ converges to one for each $x>-\nu $ and to zero for each $x<-\nu $.
\end{proof}

The fixed-parameter asymptotic distribution of the soft-thresholding
estimator is obtained by setting $\theta _{n}\equiv \theta $ in the above
theorem: It is $N(0,0)$ (i.e., pointmass at $0$) if $\theta =0$; if $\theta
\neq 0$ the total mass of the finite-sample distribution escapes to $%
\limfunc{sign}(-\theta )\infty $. Hence, the soft-thresholding estimator
when tuned to act as a consistent model selector is not even pointwise $%
n^{1/2}$-consistent (Zou (2006)) and certainly does not satisfy the `oracle'
property. [This contradicts an incorrect claim in Zhao and Yu (2006, Section
2.1) to the effect that tuning LASSO to act as a consistent model selector
results in an asymptotically normal estimator.] The fact that this estimator
is not pointwise $n^{1/2}$-consistent also suggest studying the asymptotic
distribution under a scaling that increases slower than $n^{1/2}$, an issue
that we take up further below; cf. also the appendix.

\begin{theorem}
\label{SCAD_consist} Consider the SCAD estimator with $\eta _{n}\rightarrow
0 $ and $n^{1/2}\eta _{n}\rightarrow \infty $. Assume that $\theta _{n}/\eta
_{n}\rightarrow \zeta $ for some $\zeta \in {\mathbb{R}}\cup \{-\infty
,\infty \}$ and that $n^{1/2}\theta _{n}\rightarrow \nu $ for some $\nu \in {%
\mathbb{R}}\cup \{-\infty ,\infty \}$. [Note that in case $\zeta \neq 0$ the
convergence of $n^{1/2}\theta _{n}$ already follows from that of $\theta
_{n}/\eta _{n}$, and $\nu $ is then given by $\nu =\limfunc{sign}(\zeta
)\infty $.]

\begin{enumerate}
\item If $|\zeta |<a$, or if $|\zeta |=a$ and $n^{1/2}(a\eta _{n}-\limfunc{%
sign}(\zeta )\theta _{n})\rightarrow \infty $, then $F_{SCAD,n,\theta _{n}}$
approaches pointmass at $-\nu $. In case $|\nu |<\infty $, this means that $%
F_{SCAD,n,\theta _{n}}$ converges weakly to pointmass at $-\nu $; in case $%
|\nu |=\infty $, it means that the total mass of $F_{SCAD,n,\theta _{n}}$
escapes to $-\nu $, in the sense that $F_{SCAD,n,\theta _{n}}(x)\rightarrow
0 $ for every $x\in \mathbb{R}$ if $-\nu =\infty $, and $F_{SCAD,n,\theta
_{n}}(x)\rightarrow 1$ for every $x\in \mathbb{R}$ if $-\nu =-\infty $.

\item If $|\zeta |=a$ and $n^{1/2}(a\eta _{n}-\limfunc{sign}(\zeta )\theta
_{n})\rightarrow r$ for some $r\in \mathbb{R\cup \{-\infty \}}$, then $%
F_{SCAD,n,\theta _{n}}(x)$ converges to%
\begin{equation*}
\begin{split}
\int_{-\infty }^{x}\frac{a-2}{a-1}& \phi \Big(\left\{ (a-2)u+\limfunc{sign}%
(\zeta )r\right\} /(a-1)\Big)\;\boldsymbol{1}(\limfunc{sign}(\zeta )u\leq r)
\\
& +\;\;\phi (u)\;\boldsymbol{1}(\limfunc{sign}(\zeta )u>r)\;du
\end{split}%
\end{equation*}%
for every $x\in \mathbb{R}$, with the convention that the integral over the
first term in the above expression is zero if $r=-\infty $. [In fact,
convergence is in total variation distance.]

\item If $a<|\zeta |\leq \infty $, then $F_{SCAD,n,\theta _{n}}$ converges
weakly to the standard normal distribution $N(0,1)$. [In fact, convergence
is in total variation distance.]
\end{enumerate}
\end{theorem}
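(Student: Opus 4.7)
The plan is to follow the template of the proof of Theorem~\ref{H_consist}, but with one more layer of case analysis. I decompose $F_{SCAD,n,\theta _{n}}$ as in (\ref{distri_SCAD}) into an atomic part of weight $\Phi (n^{1/2}(-\theta _{n}+\eta _{n}))-\Phi (n^{1/2}(-\theta _{n}-\eta _{n}))$ located at $-n^{1/2}\theta _{n}$, plus the six absolutely continuous pieces $f_{\pm 1}$, $f_{\pm 2}$, $f_{\pm 3}$, and handle these seven components in parallel. The atomic weight and location are controlled by Proposition~\ref{selection_prob}: the former has an explicit limit in terms of $\zeta$ (and, when $|\zeta|=1$, of the finer parameter $r$), while the latter tends to $-\nu \in \mathbb{R}\cup\{-\infty,\infty\}$.

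For each piece $f_{i}$, both the total mass $\int f_{i}$ and the endpoints of its support are of the form $n^{1/2}(c\eta _{n}-\theta _{n})$ with $c\in \{\pm 1,\pm 2,\pm a\}$, whose limits are pinned down by comparing $\theta _{n}/\eta _{n}\to \zeta$ to the thresholds $1,2,a$ exactly as in Proposition~\ref{selection_prob}; at the boundary values $|\zeta |\in \{1,2,a\}$ the finer quantities $n^{1/2}(\eta _{n}-\zeta \theta _{n})$ and $n^{1/2}(a\eta _{n}-\limfunc{sign}(\zeta )\theta _{n})$ must be tracked. Scheff\'{e}'s lemma and dominated convergence, applied separately to each piece, then yield one of three behaviors: (i) the total mass of $f_{i}$ vanishes; (ii) the support escapes to $\pm\infty$ and the remaining mass concentrates there; or (iii) the density converges in $L^{1}$ to an explicit absolutely continuous limit on a fixed interval.

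In Part~1, the hypothesis $|\zeta |<a$ (or $|\zeta |=a$ with $n^{1/2}(a\eta _{n}-\limfunc{sign}(\zeta )\theta _{n})\to \infty$) forces $\int f_{\pm 3}\to 0$, while every remaining non-vanishing piece, together with the atom at $-n^{1/2}\theta _{n}$, has its support pushed toward $-\limfunc{sign}(\zeta )\infty =-\nu$; the only subcase with $|\nu |<\infty$ is $\zeta =0$, where honest weak convergence to pointmass at $-\nu \in \mathbb{R}$ occurs. In Part~3, only $f_{\pm 3}$ retains mass, its support covers $\mathbb{R}$ in the limit and its density tends to $\phi$ everywhere, so Scheff\'{e} yields $N(0,1)$ in total variation.

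The main bookkeeping challenge is Part~2, where the boundary condition $n^{1/2}(a\eta _{n}-\limfunc{sign}(\zeta )\theta _{n})\to r$ keeps both $f_{\pm 2}$ and $f_{\pm 3}$ alive while all other pieces and the atom vanish. Taking $\zeta =a$ as the representative case, I would verify that $f_{2}$ has limit support $(-\infty ,r]$ with density converging to $\frac{a-2}{a-1}\phi (\{(a-2)x+r\}/(a-1))$, while $f_{3}$ has limit support $(r,\infty )$ with density converging to $\phi (x)$; the case $\zeta =-a$ follows from the sign-symmetry of the SCAD estimator in $\theta$. Summing the two densities and invoking Scheff\'{e} once more produces convergence in total variation to exactly the density in the statement, with the endpoint $r=-\infty$ recovering the $N(0,1)$ limit of Part~3 as a self-consistency check.
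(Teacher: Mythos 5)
Your proposal follows essentially the same route as the paper's own proof: decompose $F_{SCAD,n,\theta_n}$ into the atomic part plus the six absolutely continuous pieces $f_{\pm 1}$, $f_{\pm 2}$, $f_{\pm 3}$, write the cdf contributions as explicit Gaussian integrals after a change of variables, send the interval boundaries (all of the form $n^{1/2}(c\eta_n-\theta_n)$, $c\in\{\pm 1,\pm 2,\pm a\}$) to their $\zeta$-dependent limits, invoke Scheff\'{e}/dominated convergence, track the finer quantities and use a subsequence argument at $|\zeta|\in\{1,2,a\}$, and handle negative $\zeta$ by symmetry. That matches the paper's strategy, so the proposal is on track.

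One imprecision worth noting: your trichotomy for the pieces — mass vanishes, support escapes, or density converges in $L^1$ — misses what actually happens to $f_2$ when $2<|\zeta|\leq a$. There the support of $f_2$, namely $(n^{1/2}(\eta_n-\theta_n),\,n^{1/2}(a\eta_n-\theta_n)]$, has its left endpoint diverging to $-\infty$ and its right endpoint diverging to $+\infty$, so the support spreads over all of $\mathbb{R}$ rather than escaping. What escapes is the location of the Gaussian inside $f_2$ (the mode $-n^{1/2}(a\eta_n-\theta_n)/(a-2)$ runs off to $-\limfunc{sign}(\zeta)\infty$), with the consequence that the cdf contribution $F_{2,n,\theta_n}(x)\to 1$ for every fixed $x$ even though the support does not shrink. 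Working from the explicit integral formulas, as you propose, reveals this directly; but the heuristic ``support escapes'' classification needs an extra case. The paper sidesteps the bookkeeping by noting that since $F_{SCAD,n,\theta_n}\leq 1$, it suffices in Part~1 to exhibit a single cdf contribution (or sum of contributions) converging pointwise to $1$; this saves tracking every piece in each subcase.
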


\begin{proof}
For each $\theta $, the cdf $F_{SCAD,n,\theta }$ consists of contributions
from the atomic part and from the absolutely continuous part. The
contribution of the absolutely continuous part can be further broken down
into the contributions from the integrands $f_{1}$, $f_{2}$, $f_{3}$, $%
f_{-1} $, $f_{-2}$, and $f_{-3}$ in view of (\ref{distri_SCAD}). We hence
may write 
\begin{equation}
\begin{split}
F_{SCAD,n,\theta }(x)\quad =\quad F_{0,n,\theta }(x)\;\;& +\;\;F_{1,n,\theta
}(x)+F_{2,n,\theta }(x)+F_{3,n,\theta }(x) \\
& +\;\;F_{-1,n,\theta }(x)+F_{-2,n,\theta }(x)+F_{-3,n,\theta }(x),
\end{split}
\notag
\end{equation}%
where $F_{0,n,\theta }$ denotes the contribution of the atomic part, and
where the remaining terms on the right-hand side denote the contributions
corresponding to $f_{1}$, $f_{2}$, $f_{3}$, $f_{-1}$, $f_{-2}$, and $f_{-3}$%
, respectively; e.g., $F_{1,n,\theta }(x)=\int_{-\infty }^{x}f_{1}(u)du$.
Now $F_{1,n,\theta _{n}}(x)$ can be written as 
\begin{equation*}
F_{1,n,\theta _{n}}(x)\quad =\quad \int_{-\infty }^{x+n^{1/2}\eta _{n}}\phi
(z)\;\boldsymbol{1}\left( n^{1/2}(\eta _{n}-\theta _{n})<z\leq n^{1/2}(2\eta
_{n}-\theta _{n})\right) \;dz.
\end{equation*}%
[Use the formula for $f_{1}(u)$ given after (\ref{distri_SCAD}) with $\theta
_{n}$ in place of $\theta $, and perform a simple change of variables.] By a
similar argument, we also have 
\begin{equation}
\begin{split}
F_{2,n,\theta _{n}}(x)\;\;=\;\;& \int_{-\infty }^{\left(
(a-2)x+n^{1/2}(a\eta _{n}-\theta _{n})\right) /(a-1)}\phi (z) \\
& \qquad \times \quad \boldsymbol{1}\left( n^{1/2}(2\eta _{n}-\theta
_{n})<z\leq n^{1/2}(a\eta _{n}-\theta _{n})\right) \,dz,
\end{split}
\notag
\end{equation}%
and $F_{3,n,\theta _{n}}(x)=\int_{-\infty }^{x}\phi (z)\boldsymbol{1}\left(
z>n^{1/2}(a\eta _{n}-\theta _{n})\right) dz$.

Assume first that $0\leq \zeta <a$. In the subcase $0\leq \zeta <1$,
Proposition \ref{selection_prob} shows that the total mass of the atomic
part of $F_{SCAD,n,\theta _{n}}$ converges to one, and the statement in part
1 then follows, since $n^{1/2}\theta _{n}\rightarrow \nu $. For the
remaining subcases to be considered observe that we have $n^{1/2}\theta
_{n}\rightarrow \nu =\infty $ whenever $\zeta >0$. For the subcase $\zeta =1$%
, assume for now also that $n^{1/2}(\eta _{n}-\theta _{n})\rightarrow r\in {%
\mathbb{R}}\cup \{-\infty ,\infty \}$. Then the atomic part of $%
F_{SCAD,n,\theta _{n}}$ escapes to $-\nu =-\infty $, and the total mass of
the atomic part converges to $\Phi (r)$ by Proposition~\ref{selection_prob}.
In other words, $F_{0,n,\theta _{n}}(x)\rightarrow \Phi (r)$ for each $x\in 
\mathbb{R}$, where $F_{0,n,\theta _{n}}$ denotes the contribution from the
atomic part of $F_{SCAD,n,\theta _{n}}$. Moreover, from the preceding
formula for $F_{1,n,\theta _{n}}(x)$, it is evident that $F_{1,n,\theta
_{n}}(x)\rightarrow \int_{r}^{\infty }\phi (z)dz=1-\Phi (r)$ holds for each $%
x\in \mathbb{R}$ (because the upper limit in the integral diverges to $%
\infty $, because the lower limit in the indicator is $n^{1/2}(\eta
_{n}-\theta _{n})\rightarrow r$, and because the upper limit is $%
n^{1/2}(2\eta _{n}-\theta _{n})\rightarrow \infty $). Hence, $%
F_{SCAD,n,\theta _{n}}(x)\geq F_{0,n,\theta _{n}}(x)+F_{1,n,\theta
_{n}}(x)\rightarrow 1$ for each $x\in \mathbb{R}$, as required. Because that
limit does not depend on $r$, and because any subsequence contains a further
subsequence along which $n^{1/2}(\eta _{n}-\theta _{n})$ converges to some
limit $r\in {\mathbb{R}}\cup \{-\infty ,\infty \}$ (due to compactness of
this space), the result follows for the subcase $\zeta =1$. In the subcase $%
1<\zeta <2$, it is easy to see that $F_{1,n,\theta _{n}}(x)$ converges to
one for each $x\in \mathbb{R}$, whence $F_{SCAD,n,\theta _{n}}(x)\geq
F_{1,n,\theta _{n}}(x)\rightarrow 1$ for each $x\in \mathbb{R}$. In the
subcase $\zeta =2$, assume for now also that $n^{1/2}(2\eta _{n}-\theta
_{n})\rightarrow r\in {\mathbb{R}}\cup \{-\infty ,\infty \}$. We then see
that $F_{1,n,\theta _{n}}(x)\rightarrow \Phi (r)$ and $F_{2,n,\theta
_{n}}(x)\rightarrow 1-\Phi (r)$, whence $F_{SCAD,n,\theta
_{n}}(x)\rightarrow 1$ for each $x\in \mathbb{R}$. Because this limit does
not depend on $r$ and ${\mathbb{R}}\cup \{-\infty ,\infty \}$ is compact, a
subsequence argument as above shows that the statement follows also in this
subcase. Finally, in the subcases $2<\zeta <a$ and $\zeta =a$ but $%
n^{1/2}(a\eta _{n}-\theta _{n})\rightarrow \infty $, it suffices to note
that $F_{2,n,\theta _{n}}(x)\rightarrow 1$ for all $x\in \mathbb{R}$.

Assume next that $\zeta =a$ and that $n^{1/2}(a\eta _{n}-\theta
_{n})\rightarrow r\in {\mathbb{R}}\cup \{-\infty \}$. Note that $%
n^{1/2}(2\eta _{n}-\theta _{n})=n^{1/2}\eta _{n}(2-\theta _{n}/\eta
_{n})\rightarrow -\infty $ holds because $\zeta =a>2$. Using the formula for 
$f_{2}(u)$ and $f_{3}(u)$ given after (\ref{distri_SCAD}) with $u$ replacing 
$x$ and $\theta _{n}$ replacing $\theta $, it is then easy to see that $%
f_{2}(u)+f_{3}(u)$ converges to the integrand in the display given in part
2, for almost all $u$. Moreover, the total mass of $F_{2,n,\theta
_{n}}+F_{3,n,\theta _{n}}$ is also easily computed and seen to converge to
one. Furthermore, it is easily checked that the total mass of the limiting
cdf displayed in part 2 is one. Scheff\'{e}'s Lemma then shows that $%
F_{2,n,\theta _{n}}+F_{3,n,\theta _{n}}$, and hence $F_{SCAD,n,\theta _{n}}$%
, converge in total variation to the limit cdf given in part 2.

Next, assume that $\zeta >a$. Then the integrand in the formula for $%
F_{3,n,\theta _{n}}(x)$ converges to the density $\phi (z)$ for each $z$.
The Dominated Convergence Theorem then establishes the convergence of $%
F_{3,n,\theta _{n}}$, and hence of $F_{SCAD,n,\theta _{n}}$, to $\Phi $ in
total variation distance.

For $\zeta <0$, the proof is, mutatis mutandis, the same with $f_{-1}$, $%
f_{-2}$, and $f_{-3}$ now taking the roles of $f_{1}$, $f_{2}$, and $f_{3}$,
respectively, and with the case $-a<\zeta \leq -1$ now being handled by
showing that $1-F_{SCAD,n,\theta _{n}}(x)\rightarrow 1$ for each $x\in 
\mathbb{R}$. Alternatively, it can be reduced to what has already been
established by observing that $F_{SCAD,n,\theta _{n}}(x)=1-F_{SCAD,n,-\theta
_{n}}(-x-)$, where $F_{SCAD,n,-\theta _{n}}(\cdot -)$ denotes the limit from
the left of $F_{SCAD,n,-\theta _{n}}$ at the indicated argument.
\end{proof}

The fixed-parameter asymptotic distribution of the SCAD estimator, including
the `oracle' property discussed at the beginning of this section, can
clearly be recovered from Theorem \ref{SCAD_consist} by setting $\theta
_{n}\equiv \theta $. Like in the case of the hard-thresholding estimator,
Theorem \ref{SCAD_consist}\ shows that the asymptotic behavior of the
SCAD-estimator is much more complicated than what the `oracle' property
predicts. In particular, Theorem \ref{SCAD_consist} shows that the
SCAD-estimator is not uniformly $n^{1/2}$-consistent. [For a discussion of
the behavior of this estimator under a different scaling see the next
paragraph as well as the appendix.] Furthermore, since the finite-sample
distribution of the SCAD-estimator is highly non-normal but the \textit{%
pointwise} asymptotic distribution is normal, the latter cannot adequately
capture many of the essential features of the former. In contrast, the
asymptotic distributions given in Theorem \ref{SCAD_consist} are non-normal
in some cases. All this again shows that the `oracle' property is more of an
artifact of the asymptotic framework than of much statistical significance.

The observation, that the estimators $\hat{\theta}_{H}$ and $\hat{\theta}%
_{SCAD}$ are not uniformly $n^{1/2}$-consistent if tuned to perform
consistent model selection, prompts the question of the behavior of $c_{n}(%
\hat{\theta}_{H}-\theta )$ and $c_{n}(\hat{\theta}_{SCAD}-\theta )$ under a
sequence of norming constants $c_{n}$ that are $o(n^{1/2})$. Since both
estimators are \emph{pointwise} $n^{1/2}$-consistent, it follows that the 
\textit{pointwise} limiting distributions of $c_{n}(\hat{\theta}_{H}-\theta )
$ and $c_{n}(\hat{\theta}_{SCAD}-\theta )$ will then degenerate to pointmass
at zero. Furthermore, it is not difficult to see that under general `moving
parameter' asymptotics the finite-sample distributions of $c_{n}(\hat{\theta}%
_{H}-\theta _{n})$ and $c_{n}(\hat{\theta}_{SCAD}-\theta _{n})$ are then
nevertheless stochastically unbounded for certain sequences of parameters $%
\theta _{n}$ unless $c_{n}=O(\eta _{n}^{-1})$. If $c_{n}=O(\eta _{n}^{-1})$,
Theorem \ref{unif_cons} has shown that $c_{n}(\hat{\theta}_{H}-\theta _{n})$
and $c_{n}(\hat{\theta}_{SCAD}-\theta _{n})$ are indeed stochastically
bounded. Hence, the uniform convergence rate of $\hat{\theta}_{H}$ and $\hat{%
\theta}_{SCAD}$ is seen to be given precisely by $\eta _{n}$. The precise
limit distributions of these estimators under a scaling by $c_{n}$ can be
obtained in a manner similar to the above theorems and are given in Theorems %
\ref{hard_rescaled} and \ref{SCAD_rescaled} in the appendix for the (only
interesting) case $c_{n}=\eta _{n}^{-1}$. It turns out that the limit
distributions under `moving parameter' asymptotics are always given by a
linear combination of at most two pointmasses, each located in the interval $%
[-1,1]$. With regard to the soft-thresholding estimator we have already
observed that it is not even pointwise $n^{1/2}$-consistent. Even the
distributions of $c_{n}(\hat{\theta}_{S}-\theta )$ with $c_{n}=o(n^{1/2})$
are stochastically unbounded if $\theta \neq 0$ unless $c_{n}=O(\eta
_{n}^{-1})$. This is most easily seen by using the relation to the
hard-thresholding estimator given in (\ref{hard-soft}). If $c_{n}=O(\eta
_{n}^{-1})$, relation (\ref{hard-soft}) also shows that $c_{n}(\hat{\theta}%
_{S}-\theta )$ is stochastically bounded, but has a degenerate (pointwise)
limiting distribution. This has been noted by Zou (2006). In view of Theorem %
\ref{unif_cons}, under this condition on $c_{n}$ the distributions of $c_{n}(%
\hat{\theta}_{S}-\theta _{n})$ are in fact stochastically bounded for 
\textit{any} sequence $\theta _{n}$. The precise forms of the possible limit
distributions under such a `moving parameter' asymptotic are given in
Theorem \ref{soft_rescaled} in the appendix.

Theorems \ref{H_consist} and \ref{SCAD_consist} demonstrate that the
`oracle' property of the hard-thresholding estimator and of the
SCAD-estimator paints a misleading picture of the actual finite-sample
behavior of these estimators due to nonuniformity problems. In order to
rescue the `oracle' property, sometimes the argument is put forward that
parameter sequences $\theta _{n}$ that are responsible for the nonuniformity
problem should be eliminated from the parameter space a priori, since such $%
\theta _{n}$ are supposedly close to zero and hence are difficult to
distinguish statistically from zero. While we think that such a reasoning is
not sensible (because asymptotic properties of statistical procedures that
are quite unstable under local perturbations of the parameter are highly
suspect), we next show that the suggested reasoning actually is flawed:
Consider first the consistently tuned SCAD-estimator. Suppose one considers
a priori the restricted parameter space $\Theta _{n}$\ of the form $\Theta
_{n}=\left\{ \theta :\theta =0\text{ or }\left\vert \theta \right\vert \geq
b_{n}\right\} $ for some sequence $b_{n}>0$. In order to achieve that for
every $\theta _{n}\in \Theta _{n}$ with $\theta _{n}\neq 0$, the
distribution of $n^{1/2}(\hat{\theta}_{SCAD}-\theta _{n})$ converges weakly
to the standard normal $N(0,1)$ (as desired when attempting to rescue the
`oracle' property), it follows from Theorem \ref{SCAD_consist} that $b_{n}$
would have to satisfy $n^{1/2}\eta _{n}(a-b_{n}/\eta _{n})\rightarrow
-\infty $ (e.g., $b_{n}\equiv b\eta _{n}$ with $b>a$). But then it is easy
to see that the `forbidden' set $\mathbb{R}\diagdown \Theta _{n}$ contains
elements $\theta _{n}$ that are large in the sense that (i) they are of
order larger than $n^{-1/2}$ and (ii) they are classified as nonzero with
probability converging to unity by the very same SCAD-procedure, i.e., $%
P_{n,\theta _{n}}(\hat{\theta}_{SCAD}\neq 0)\rightarrow 1$ holds (to see
this use Proposition \ref{selection_prob}). On top of this, the parameter
space $\Theta _{n}$\ is highly artificial, depends on sample size, and also
on the tuning parameter $\eta _{n}$ and thus on the estimation procedure
used. An analogous statement holds for the hard-thresholding estimator (with
the exception that the `forbidden' set in this case contains $\theta _{n}$
that are large in the sense that they satisfy (i) above and (ii) are
classified as non-zero with probability tending to unity by any \textit{%
conservatively} tuned hard-thresholding procedure). Taken together, this
shows that adopting a parameter space like $\Theta _{n}$ rules out values of 
$\theta $ that are substantially large, and not only values of $\theta $
that are statistically difficult to distinguish from zero. Hence, there
seems to be little support for adopting such $\Theta _{n}$ as the parameter
space.

\begin{remark}
\normalfont\label{rxxx} The theorems in this subsection actually completely
describe the limiting behavior of the finite-sample distributions of $\hat{%
\theta}_{H}$, $\hat{\theta}_{S}$, and $\hat{\theta}_{SCAD}$ without \emph{%
any }condition on the sequence of parameters $\theta _{n}$. To see this,
just apply the theorems to subsequences and note that by compactness of ${%
\mathbb{R}}\cup \{-\infty ,\infty \}$ we can select from each subsequence a
further subsequence such that the relevant quantities like $n^{1/2}\theta
_{n}$, $\theta _{n}/\eta _{n}$, $n^{1/2}(\eta _{n}-\theta _{n})$, $%
n^{1/2}(\eta _{n}+\theta _{n})$, etc. converge in ${\mathbb{R}}\cup
\{-\infty ,\infty \}$ along this further subsequence.
\end{remark}

\begin{remark}
\normalfont\label{ry} (i) As a point of interest we note that the full
complexity of the possible limiting distributions in Theorems \ref{H_consist}%
, \ref{S_consist}, and \ref{SCAD_consist} already arises if we restrict the
sequences $\theta _{n}$ to a bounded neighborhood of zero. Hence, the
phenomena described by these theorems are of a local nature, and are not
tied in any way to the unboundedness of the parameter space.

(ii) It is also interesting to observe that what governs the different
cases, in Theorems \ref{H_consist} and \ref{SCAD_consist}, is essentially
the behavior of $\theta _{n}/\eta _{n}$, which is of smaller order than $%
n^{1/2}\theta _{n}$ because $n^{1/2}\eta _{n}\rightarrow \infty $ in the
consistent case. Hence, an analysis relying on the usual local asymptotics
based on perturbations of $\theta $ of the order of $n^{-1/2}$ does not
properly reveal all possible limits of the finite-sample distributions in
the case where the estimators perform consistent model selection.
\end{remark}

\begin{remark}
\normalfont\label{ryy} Similar as in Section \ref{sec_conserv}, the
mathematical reason for the failure of the pointwise asymptotic distribution
to capture the behavior of the finite-sample distribution well is that the
convergence of the latter to the former is not uniform in the underlying
parameter $\theta $. See Leeb and P\"{o}tscher (2003, 2005) for more
discussion in the context of post-model-selection estimators.
\end{remark}

\section{Impossibility results for estimating the distribution of $\hat{%
\protect\theta}_{H}$, $\hat{\protect\theta}_{S}$, and $\hat{\protect\theta}%
_{SCAD}$\label{imposs}}

As shown in Section \ref{finite}, the cdfs $F_{H,n,\theta }$, $F_{S,n,\theta
}$, and $F_{SCAD,n,\theta }$ of the (centered and scaled) estimators $\hat{%
\theta}_{H}$, $\hat{\theta}_{S}$, and $\hat{\theta}_{SCAD}$ depend on the
unknown parameter $\theta $ in a complicated manner. It is hence of interest
to consider estimation of these cdfs. We show that this is an intrinsically
difficult estimation problem in the sense that these cdfs can not be
estimated in a uniformly consistent fashion. Parts of the results that
follow have been presented in earlier work (in slightly different settings):
For a general class of post-model-selection estimators including the
hard-thresholding estimator, this phenomenon was discussed in Leeb and P\"{o}%
tscher (2006b,2008b) for the case where the estimator is tuned to be
conservative, whereas Leeb and P\"{o}tscher (2006a) consider the case where
the hard-thresholding estimator is tuned to be consistent; the latter paper
also gives similar results for a soft-thresholding estimator tuned to be
conservative. In the following, we give a simple unified treatment of
hard-thresholding, soft-thresholding, and also of the SCAD estimator. For
the SCAD estimator and for the consistently tuned soft-thresholding
estimator, such non-uniformity phenomena in estimating the estimator's cdf
have not been established before. We provide large-sample results that cover
both consistent and conservative choices of the tuning parameter, as well as
finite-sample results that hold for any choice of tuning parameter.

It is straight-forward to construct consistent estimators for the
distributions of the (centered and scaled) estimators $\hat{\theta}_{H}$, $%
\hat{\theta}_{S}$ and $\hat{\theta}_{SCAD}$. One popular choice is to use
subsampling or the $m$ out of $n$ bootstrap with $m/n\rightarrow 0$. Another
possibility is to use the pointwise large-sample limit distributions derived
in Section~\ref{asydistribs} together with a properly chosen pre-test of the
hypothesis $\theta =0$ versus $\theta \neq 0$: Because the pointwise
large-sample limit distribution takes only two different functional forms
depending on whether $\theta =0$ or $\theta \neq 0$, one can perform a
pre-test that rejects the hypothesis $\theta =0$ in case $|\bar{y}|>n^{-1/4}$%
, say, and estimate the finite-sample distribution by that large-sample
limit formula that corresponds to the outcome of the pre-test;\footnote{%
In the consevative case, the asymptotic distribution can also depend on $e$
which is then to be replaced by $n^{1/2}\eta _{n}$.} the test's critical
value $n^{-1/4}$ ensures that the correct large-sample limit formula is
selected with probability approaching one as sample size increases.

When estimating the distribution of thresholding (and related) estimators,
there is evidence in the literature that certain specific consistent
estimation procedures, like those sketched above, may not perform well in a
worst-case scenario. For some examples, see Kulperger and Ahmed (1992); the
disclaimer issued after Corollary~2.1 in Beran (1997); the discussion at the
end of Section~4 in Knight and Fu (2000); or Samworth (2003). The next
result shows that this problem is not caused by the specifics of the
consistent estimators under consideration but is an intrinsic feature of the
estimation problem itself.

\begin{theorem}
\label{t1} Let $\hat{\theta}$ denote any one of the estimators $\hat{\theta}%
_{H}$, $\hat{\theta}_{S}$, or $\hat{\theta}_{SCAD}$, and write $F_{n,\theta
} $ for the cdf of $n^{1/2}(\hat{\theta}-\theta )$ under $P_{n,\theta }$.
Consider a sequence of tuning parameters such that $\eta _{n}\rightarrow 0$
and $n^{1/2}\eta _{n}\rightarrow e$ as $n\rightarrow \infty $ with $0\leq
e\leq \infty $. Let $t\in \mathbb{R}$ be arbitrary. Then \emph{every
consistent }estimator\emph{\ }$\hat{F}_{n}(t)$ of $F_{n,\theta }(t)$
satisfies 
\begin{equation}
\lim_{n\rightarrow \infty }\sup_{|\theta |<c/n^{1/2}}P_{n,\theta }\left(
\left\vert \hat{F}_{n}(t)-F_{n,\theta }(t)\right\vert \;>\;\varepsilon
\right) \quad =\quad 1  \notag
\end{equation}%
for each $\varepsilon <(\Phi (t+e)-\Phi (t-e))/2$ and each $c>\left\vert
t\right\vert $.\emph{\ }In particular, no uniformly consistent estimator for 
$F_{n,\theta }(t)$ exists.
\end{theorem}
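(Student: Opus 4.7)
The plan is to show that within any $n^{-1/2}$-neighborhood of $\theta=0$ the map $\theta\mapsto F_{n,\theta}(t)$ has a jump of size essentially $\Phi(t+e)-\Phi(t-e)$, while the family $\{P_{n,\theta}\}$ over that neighborhood is mutually contiguous by the standard LAN property of the Gaussian location model. Any consistent estimator is therefore pinned down by its behaviour at $\theta=0$ via contiguity, and so cannot simultaneously track $F_{n,\theta}(t)$ on both sides of the jump.

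To locate the discontinuity, note that the atom of $F_{n,\theta}$ sits at $-n^{1/2}\theta$ with mass $\Phi(n^{1/2}(-\theta+\eta_n))-\Phi(n^{1/2}(-\theta-\eta_n))$, and is included in $F_{n,\theta}(t)$ precisely when $-n^{1/2}\theta\le t$. I would fix $\delta\in(0,c-|t|)$ and set $\theta_n^{(1)}=-t/n^{1/2}$ and $\theta_n^{(2)}=-(t+\delta)/n^{1/2}$; both sequences lie in $\{|\theta|<c/n^{1/2}\}$. Using the finite-sample formulas of Section~\ref{finite} --- most transparently (\ref{distri_S_1}) for $\hat\theta_S$, with parallel direct evaluations of (\ref{distri_H}) and (\ref{distri_SCAD}) restricted to $(-\infty,t]$ --- one obtains
\[
\lim_{n\to\infty}\bigl(F_{n,\theta_n^{(1)}}(t)-F_{n,\theta_n^{(2)}}(t)\bigr)\;=\;\Phi(t+e)-\Phi(t+\delta-e),
\]
which can be made arbitrarily close to $\Phi(t+e)-\Phi(t-e)$ by taking $\delta$ small.

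I would then invoke contiguity: since $\theta_n^{(i)}=O(n^{-1/2})$, the measures $P_{n,\theta_n^{(i)}}$ are mutually contiguous with $P_{n,0}$ (the Gaussian likelihood ratio is $\exp(hZ_n-h^2/2)$). Consistency of $\hat F_n(t)$ at $\theta=0$ gives $\hat F_n(t)-F_{n,0}(t)\to 0$ in $P_{n,0}$-probability, and contiguity transports this to $P_{n,\theta_n^{(i)}}$. Given $\varepsilon<(\Phi(t+e)-\Phi(t-e))/2$, choose $\delta$ so small that the deterministic gap above exceeds $2\varepsilon+\gamma$ for some $\gamma>0$. The triangle inequality applied to $F_{n,0}(t)$ then forces at least one of the two values $|F_{n,0}(t)-F_{n,\theta_n^{(i)}}(t)|$ (for $i=1,2$) to exceed $\varepsilon+\gamma/2$ eventually, and for that $i$ the further bound
\[
|\hat F_n(t)-F_{n,\theta_n^{(i)}}(t)|\;\ge\;|F_{n,0}(t)-F_{n,\theta_n^{(i)}}(t)|\;-\;|\hat F_n(t)-F_{n,0}(t)|
\]
yields $P_{n,\theta_n^{(i)}}\bigl(|\hat F_n(t)-F_{n,\theta_n^{(i)}}(t)|>\varepsilon\bigr)\to 1$. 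Since $\theta_n^{(i)}\in\{|\theta|<c/n^{1/2}\}$, the supremum in the theorem is bounded below by this probability, hence tends to one.

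The main obstacle I anticipate is the deterministic bookkeeping in the second paragraph, uniformly across the three estimators. Soft-thresholding collapses immediately via (\ref{distri_S_1}); for hard-thresholding and SCAD one has to track which pieces of the piecewise absolutely continuous densities fall into $(-\infty,t]$ once $\theta$ shifts by $\delta/n^{1/2}$, and verify that the resulting AC-part differences are of order $O(\delta)$ so that they do not cancel the atom's contribution to the jump. The boundary cases $e=0$ (where the conclusion is vacuous, since the condition on $\varepsilon$ becomes $\varepsilon<0$) and $e=\infty$ (where the atom mass tends to one and both AC integrals on $(-\infty,t]$ collapse) require a separate but routine verification that the same scheme goes through.
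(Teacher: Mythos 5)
Your proposal is correct and takes essentially the same route as the paper: locate the jump of size roughly $\Phi(t+e)-\Phi(t-e)$ in $\theta\mapsto F_{n,\theta}(t)$ at $\theta\approx -t/n^{1/2}$, then exploit contiguity of nearby local alternatives. The paper perturbs symmetrically, setting $\theta_n(\pm\delta)=-(t\pm\delta)/n^{1/2}$, takes a supremum over $\delta$, and then closes the argument by citing Lemma~3.1 of Leeb and P\"{o}tscher (2006a) --- a black-box impossibility lemma for contiguous alternatives. You instead take $\theta_n^{(1)}=-t/n^{1/2}$ and $\theta_n^{(2)}=-(t+\delta)/n^{1/2}$, and you unpack the closing step into an explicit triangle-inequality argument anchored at $P_{n,0}$; that argument is precisely what the cited lemma formalizes, so the content is the same though yours is self-contained. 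Two small remarks: your asserted limit $\Phi(t+e)-\Phi(t+\delta-e)$ for $F_{n,\theta_n^{(1)}}(t)-F_{n,\theta_n^{(2)}}(t)$ is exact for hard-thresholding, but for soft-thresholding the difference is exactly $\Phi(t+n^{1/2}\eta_n)-\Phi(t-n^{1/2}\eta_n)$, independent of $\delta$; this is harmless since it exceeds your claimed value and you only need a lower bound. Also, the paper handles the absolutely-continuous bookkeeping that you flag as the ``main obstacle'' in one line, by noting that the AC contribution to $F_{n,\theta}(t)$ is continuous in $\theta$ (via dominated convergence applied to the explicit densities), so its $\delta$-perturbation is $o(1)$ as $\delta\to 0$ and cannot cancel the atom --- a uniform one-sentence fix across all three estimators that you may want to adopt in place of case-by-case tracking.
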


\begin{proof}
For two sequences $\theta _{n}^{(1)}$ and $\theta _{n}^{(2)}$ satisfying $%
|\theta _{n}^{(i)}|<c/n^{1/2}$, $i=1,2$, the probability measures $%
P_{n,\theta _{n}^{(1)}}$ and $P_{n,\theta _{n}^{(2)}}$ are mutually
contiguous as is elementary to verify (cf., e.g., Lemma~A.1 of Leeb and P%
\"{o}tscher (2006a)). The corresponding estimands $F_{n,\theta
_{n}^{(1)}}(t) $ and $F_{n,\theta _{n}^{(2)}}(t)$, however, do not
necessarily get close to each other: For each $\delta $ write $\theta
_{n}(\delta )$ as shorthand for $\theta _{n}(\delta )=-(t+\delta )/n^{1/2}$.
The cdfs $F_{n,\theta _{n}(\delta )}(\cdot )$ and $F_{n,\theta _{n}(-\delta
)}(\cdot )$ have a jump at $t+\delta $ and at $t-\delta $, respectively, so
that for $\delta >0$%
\begin{equation}
F_{n,\theta _{n}(-\delta )}(t)-F_{n,\theta _{n}(\delta )}(t)\quad =\quad
\Phi (t-\delta +n^{1/2}\eta _{n})-\Phi (t-\delta -n^{1/2}\eta
_{n})\;+\;r(\delta );  \label{pt1.1}
\end{equation}%
cf. (\ref{distri_H}), (\ref{distri_S}), and (\ref{distri_SCAD}) for $\hat{%
\theta}_{H}$, $\hat{\theta}_{S}$, and $\hat{\theta}_{SCAD}$, respectively.
Moreover, $r(\delta )$ goes to zero with $\delta \rightarrow 0$, because the
absolutely continuous part of $F_{n,\theta }(t)$ is a continuous function of 
$\theta $ (again in view of the finite-sample formulae and dominated
convergence). Taking the supremum of $\left\vert F_{n,\theta _{n}(-\delta
)}(t)-F_{n,\theta _{n}(\delta )}(t)\right\vert $ over all $\delta $ with $%
0\leq \delta <c-|t|$, we obtain that this supremum is bounded from below by $%
\Phi (t+n^{1/2}\eta _{n})-\Phi (t-n^{1/2}\eta _{n})$. [To see this note that
this supremum is not less than $\lim_{i\rightarrow \infty }\left\vert
F_{n,\theta _{n}(-1/i)}(t)-F_{n,\theta _{n}(1/i)}(t)\right\vert $ and use (%
\ref{pt1.1}).] Because that lower bound converges to $\Phi (t+e)-\Phi (t-e)$
as $n\rightarrow \infty $, the theorem now follows from Lemma~3.1 of Leeb
and P\"{o}tscher (2006a). [Use this result with the identifications $\beta
=\theta $, $\varphi _{n}(\beta )=F_{n,\theta }(t)$, $B_{n}=\{\theta
:\;|\theta |<c/n^{1/2}\}$, $\alpha =0$, and with $d(a,b)=|a-b|$. Moreover,
note that $B_{n}$ contains $\theta _{n}(\delta )$ and $\theta _{n}(-\delta )$
for $0\leq \delta <c-|t|$.]
\end{proof}

We stress that the above result also applies to any kind of bootstrap- or
subsampling-based estimator of the cdf $F_{n,\theta }$ whatsoever, since the
results in Leeb and P\"{o}tscher (2006a) on which the proof of Theorem~\ref%
{t1} rests apply to arbitrary randomized estimators (cf. Lemma 3.6 in Leeb
and P\"{o}tscher (2006a)); the same applies to Theorem~\ref{t2} that follows
as well as to Theorem \ref{t3}\ in the appendix.

Loosely speaking, Theorem~\ref{t1} states that any consistent estimator for
the cdf of interest suffers from an unavoidable worst-case error of at least 
$\varepsilon $ with $\varepsilon <(\Phi (t+e)-\Phi (t-e))/2$. The error
range, i.e., $(\Phi (t+e)-\Phi (t-e))/2$, is governed by the limit $%
e=\lim_{n}n^{1/2}\eta _{n}$. In case the estimator $\hat{\theta}$ is tuned
to be consistent, i.e., in case $e=\infty $, the error range equals $1/2$,
and the phenomenon is most pronounced. If the estimator $\hat{\theta}$ is
tuned to be conservative so that $e<\infty $, the error range is less than $%
1/2$ but can still be substantial. Only in case $e=0$ the error range equals
zero, and the condition $\varepsilon <(\Phi (t+e)-\Phi (t-e))/2$ in Theorem~%
\ref{t1} leads to a trivial conclusion. This is, however, not surprising as
then the resulting estimator is uniformly asymptotically equivalent to the
unrestricted maximum likelihood estimator $\bar{y}$; cf. Remark \ref{r1}.

A similar non-uniformity phenomenon as described in Theorem~\ref{t1} for
consistent estimators $\hat{F}_{n}(t)$ also occurs for not necessarily
consistent estimators. For such arbitrary estimators, we find in the
following that the phenomenon can be somewhat less pronounced, in the sense
that the lower bound is now $1/2$ instead of $1$; cf. (\ref{t2.1}) below.
The following theorem gives a large-sample limit result that parallels
Theorem~\ref{t1}, as well as a finite-sample result, both for arbitrary (and
not necessarily consistent) estimators of the cdf.

\begin{theorem}
\label{t2} Let $\hat{\theta}$ denote any one of the estimators $\hat{\theta}%
_{H}$, $\hat{\theta}_{S}$, or $\hat{\theta}_{SCAD}$, and write $F_{n,\theta
} $ for the cdf of $n^{1/2}(\hat{\theta}-\theta )$ under $P_{n,\theta }$.
Let $0<\eta _{n}<\infty $ and let $t\in \mathbb{R}$ be arbitrary. Then \emph{%
every} estimator $\hat{F}_{n}(t)$ of $F_{n,\theta }(t)$ satisfies 
\begin{equation}
\sup_{|\theta |<c/n^{1/2}}P_{n,\theta }\left( \left\vert \hat{F}%
_{n}(t)-F_{n,\theta }(t)\right\vert \;>\;\varepsilon \right) \quad \geq
\quad \frac{1}{2}  \label{t2.2}
\end{equation}%
for each $\varepsilon <(\Phi (t+n^{1/2}\eta _{n})-\Phi (t-n^{1/2}\eta
_{n}))/2$, for each $c>\left\vert t\right\vert $, and for each fixed sample
size $n$. If $\eta _{n}$ satisfies $\eta _{n}\rightarrow 0$ and $n^{1/2}\eta
_{n}\rightarrow e$ as $n\rightarrow \infty $ with $0\leq e\leq \infty $, we
thus have 
\begin{equation}
\liminf_{n\rightarrow \infty }\inf_{\hat{F}_{n}(t)}\sup_{|\theta
|<c/n^{1/2}}P_{n,\theta }\left( \left\vert \hat{F}_{n}(t)-F_{n,\theta
}(t)\right\vert \;>\;\varepsilon \right) \quad \geq \quad \frac{1}{2}
\label{t2.1}
\end{equation}%
for each $\varepsilon <(\Phi (t+e)-\Phi (t-e))/2$ and for each $c>\left\vert
t\right\vert $, where the infimum in (\ref{t2.1}) extends over \emph{all}
estimators $\hat{F}_{n}(t)$.
\end{theorem}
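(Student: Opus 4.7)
My plan is to prove the finite-sample bound (\ref{t2.2}) directly via a two-point Le~Cam argument, and then read off (\ref{t2.1}) as an immediate corollary by letting $n\to\infty$. The heart of the matter is to exhibit two parameter points inside $\{|\theta|<c/n^{1/2}\}$ whose sampling distributions are nearly indistinguishable in total variation but whose estimands $F_{n,\theta}(t)$ are separated by nearly $\Phi(t+n^{1/2}\eta_n)-\Phi(t-n^{1/2}\eta_n)$.

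Concretely, I would reuse the computation carried out at equation (\ref{pt1.1}) in the proof of Theorem~\ref{t1}. Setting $\theta_n(\delta):=-(t+\delta)/n^{1/2}$, that equation gives
$$F_{n,\theta_n(-\delta)}(t)-F_{n,\theta_n(\delta)}(t)\;=\;\Phi(t-\delta+n^{1/2}\eta_n)-\Phi(t-\delta-n^{1/2}\eta_n)+r(\delta),$$
with $r(\delta)\to 0$ as $\delta\to 0$; hence the gap tends to $\Phi(t+n^{1/2}\eta_n)-\Phi(t-n^{1/2}\eta_n)$. For any $\varepsilon<(\Phi(t+n^{1/2}\eta_n)-\Phi(t-n^{1/2}\eta_n))/2$, I then pick $\delta>0$ small enough that (i) $|\theta_n(\pm\delta)|<c/n^{1/2}$ (possible because $c>|t|$) and (ii) $F_{n,\theta_n(-\delta)}(t)-F_{n,\theta_n(\delta)}(t)>2\varepsilon$. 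Writing $A_{\pm}:=\{|\hat{F}_n(t)-F_{n,\theta_n(\pm\delta)}(t)|>\varepsilon\}$, the triangle inequality forces $A_-\cup A_+=\Omega$ pointwise (otherwise the two estimands would be within $2\varepsilon$ of each other), so under any probability measure $Q$ we have $Q(A_-)+Q(A_+)\geq 1$.

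The key additional ingredient is that, under $P_{n,\theta}$, the sample is i.i.d.\ $N(\theta,1)$, so the total variation distance $\mathrm{TV}_n(\delta):=\mathrm{TV}(P_{n,\theta_n(-\delta)},P_{n,\theta_n(\delta)})$ tends to zero as $\delta\downarrow 0$ for each fixed $n$, by the standard $L^1$-continuity of the Gaussian product density in its mean. Combining this with the cover yields
$$P_{n,\theta_n(-\delta)}(A_-)+P_{n,\theta_n(\delta)}(A_+)\;\geq\;P_{n,\theta_n(-\delta)}(A_-)+P_{n,\theta_n(-\delta)}(A_+)-\mathrm{TV}_n(\delta)\;\geq\;1-\mathrm{TV}_n(\delta),$$
so the larger of the two probabilities on the left is at least $(1-\mathrm{TV}_n(\delta))/2$. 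Both are dominated by $\sup_{|\theta|<c/n^{1/2}}P_{n,\theta}(|\hat{F}_n(t)-F_{n,\theta}(t)|>\varepsilon)$, and letting $\delta\to 0$ gives (\ref{t2.2}). For (\ref{t2.1}), note that $\Phi(t+n^{1/2}\eta_n)-\Phi(t-n^{1/2}\eta_n)\to\Phi(t+e)-\Phi(t-e)$ as $n\to\infty$ (with limit $1$ in the case $e=\infty$), so any $\varepsilon<(\Phi(t+e)-\Phi(t-e))/2$ satisfies $\varepsilon<(\Phi(t+n^{1/2}\eta_n)-\Phi(t-n^{1/2}\eta_n))/2$ for all large $n$, and (\ref{t2.2}) passes to the liminf.

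The only non-routine step is the TV control described above; everything else is elementary manipulation of the finite-sample cdf formulae (\ref{distri_H}), (\ref{distri_S}), (\ref{distri_SCAD}) plus the triangle-inequality cover. The conceptual difference from Theorem~\ref{t1} is that, since consistency of $\hat{F}_n(t)$ is no longer available, one cannot feed the two points into Lemma~3.1 of Leeb and P\"otscher (2006a) to upgrade the bound to probability $1$; the raw two-point split retains only the factor $1/2$, which is exactly what is asserted in (\ref{t2.1}).
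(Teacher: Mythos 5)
Your proof is correct and follows essentially the same route as the paper's: both fix $\theta_n(\pm\delta)=-(t\pm\delta)/n^{1/2}$, use (\ref{pt1.1}) to separate the estimands, exploit that $\mathrm{TV}(P_{n,\theta_n(\delta)},P_{n,\theta_n(-\delta)})\to 0$ as $\delta\downarrow 0$ at fixed $n$, and let $\delta\downarrow 0$ to sharpen the lower bound to $1/2$ and the admissible $\varepsilon$-range to $(\Phi(t+n^{1/2}\eta_n)-\Phi(t-n^{1/2}\eta_n))/2$. Where the paper delegates the core two-point bound $\sup_\theta P_{n,\theta}(\cdot)\ge\tfrac12(1-\mathrm{TV})$ to Lemma~3.2 and Remark~B.2 of Leeb and P\"otscher (2006a), you derive it from scratch via the triangle-inequality cover $A_-\cup A_+=\Omega$ plus the TV-shift of one of the two probabilities; this makes your argument self-contained but is the same estimate. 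One small remark: you phrase the cover as a statement about events on $\Omega$, which presupposes that $\hat F_n(t)$ is a deterministic function of the sample; the paper's invocation of the Leeb--P\"otscher lemmas also covers randomized estimators, which is worth a sentence (condition on the auxiliary randomization, or note that the same two-point inequality holds for Markov kernels). Apart from that, the deduction of (\ref{t2.1}) from (\ref{t2.2}) via $\Phi(t+n^{1/2}\eta_n)-\Phi(t-n^{1/2}\eta_n)\to\Phi(t+e)-\Phi(t-e)$ is exactly what the theorem statement's ``we thus have'' intends, and is correct.
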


\begin{proof}
Only the finite-sample statement needs to be proven. Let $\theta _{n}(\delta
)$ be as in the proof of Theorem~\ref{t1}. The total variation distance of $%
P_{n,\theta _{n}(\delta )}$ and $P_{n,\theta _{n}(-\delta )}$, i.e., $%
||P_{n,\theta _{n}(\delta )}-P_{n,\theta _{n}(-\delta )}||_{TV}$, goes to
zero as $\delta \rightarrow 0$ (which is easy to see, either by direct
computation or using, say, Lemma~A.1 of Leeb and P\"{o}tscher (2006a)). In
view of (\ref{pt1.1}), however, the estimands $F_{n,\theta _{n}(\delta )}(t)$
and $F_{n,\theta _{n}(-\delta )}(t)$ do not get close to each other as $%
\delta \rightarrow 0$ ($\delta >0$), as we have already seen in the proof of
Theorem~\ref{t1}. For each $\varepsilon $ that is smaller than $\left\vert
F_{n,\theta _{n}(-\delta )}(t)-F_{n,\theta _{n}(\delta )}(t)\right\vert /2$,
the left-hand side of (\ref{t2.2}) is bounded from below by 
\begin{equation}
\frac{1}{2}\Big(1-||P_{n,\theta _{n}(\delta )}-P_{n,\theta _{n}(-\delta
)}||_{TV}\Big).  \notag
\end{equation}%
This follows from Lemma~3.2 of Leeb and P\"{o}tscher (2006a) together with
Remark~B.2 of that paper. [Use the result described in Remark~B.2 with $%
A=\{n\}$, $\beta =\theta $, $B_{n}=\{\theta _{n}(\delta ),\theta
_{n}(-\delta )\}$, $\varphi _{n}(\beta )=F_{n,\theta }(t)$, $d(a,b)=|a-b|$,
and with $\delta ^{\ast }$ equal to $\left\vert F_{n,\theta _{n}(-\delta
)}(t)-F_{n,\theta _{n}(\delta )}(t)\right\vert $. Moreover, note that $B_{n}$
is contained in $\{\theta :\left\vert \theta \right\vert <c/n^{1/2}\}$
provided $0<\delta <c-|t|$.] For $\delta \rightarrow 0$, now observe that
the expression in the preceding display converges to $1/2$, i.e., the lower
bound in (\ref{t2.2}), and that $\left\vert F_{n,\theta _{n}(-\delta
)}(t)-F_{n,\theta _{n}(\delta )}(t)\right\vert $ converges to $\Phi
(t+n^{1/2}\eta _{n})-\Phi (t-n^{1/2}\eta _{n})$.
\end{proof}

Apart from being of interest in its own right, the asymptotic statement in
Theorem~\ref{t2} also provides additional insight into some phenomena
related to inference based on shrinkage-type estimators that have recently
attracted some attention: When estimating the cdf of a hard-thresholding
estimator, Samworth (2003) noted that, while the bootstrap is not
consistent, it nevertheless may perform better, in a uniform sense, than the 
$m$ out of $n$ bootstrap which is consistent (provided $m\rightarrow \infty $%
, $m/n\rightarrow 0$). Theorem~\ref{t1} and the asymptotic statement in
Theorem~\ref{t2} together show that this phenomenon of better performance of
the bootstrap is possible precisely because the bootstrap is not consistent.

The finite-sample statement in Theorem~\ref{t2} clearly reveals how the
estimability of the cdf of the estimator depends on the tuning parameter $%
\eta _{n}$: A larger value of $\eta _{n}$, which results in a `more sparse'
estimator in view of (\ref{model_prob}), directly corresponds to a larger
range $(\Phi (t+n^{1/2}\eta _{n})-\Phi (t-n^{1/2}\eta _{n}))/2$ for the
error $\varepsilon $ within which any estimator $\hat{F}_{n}(t)$ performs
poorly in the sense of (\ref{t2.2}). In large samples, the limit $%
e=\lim_{n}n^{1/2}\eta _{n}$ takes the role of $n^{1/2}\eta _{n}$.

An impossibility result paralleling Theorem \ref{t2} for the cdf of $\eta
_{n}^{-1}(\hat{\theta}-\theta )$, where $\hat{\theta}=\hat{\theta}_{H}$, $%
\hat{\theta}_{S}$, or $\hat{\theta}_{SCAD}$, is given in the appendix.

\section{Conclusion\label{conclusion}}

We have studied the distribution of the LASSO, i.e., of a soft-thresholding
estimator, of the SCAD, and of a hard-thresholding estimator in finite
samples and in the large-sample limit. The finite-sample distributions of
these estimators were found to be highly non-normal, because they are a
mixture of a singular normal distribution and an absolutely continuous
component that can be multimodal, for example. The large-sample behavior of
these distributions depends on the choice of the estimators' tuning
parameter where, in essence, two cases can occur:

In the first case, the estimator can be viewed as performing conservative
model selection. In this case, fixed-parameter asymptotics, where the true
parameters are held fixed while sample size increases, reflect the
large-sample behavior only in part. `Moving parameter' asymptotics, where
the true parameter may depend on sample size, give a more complete picture.
We have seen that the distribution of the LASSO, of the SCAD, and of the
hard-thresholding estimator can be highly non-normal irrespective of sample
size, in particular in the statistically interesting case where the true
parameter is close (in an appropriate sense) to a lower-dimensional
submodel. This also shows that the finite-sample phenomena that we have
observed are not small-sample effects but can occur at any sample size.

In the second case, the estimator can be viewed as performing consistent
model selection, and the hard-thresholding as well as the SCAD estimator
have the `oracle' property in the sense of Fan and Li (2001). [This is not
so for the LASSO.] This `oracle' property, which is based on fixed-parameter
asymptotics, seems to suggest that the estimator in question performs very
well in large samples. However, as before, fixed-parameter asymptotics do
not capture the whole range of large-sample phenomena that can occur. With
`moving parameter' asymptotics, we have shown that the distribution of these
estimators can again be highly non-normal, even in large samples. In
addition, we have found that the observed finite-sample phenomena not only
can persist but actually can be more pronounced for larger sample sizes. For
example, the distribution of the SCAD estimator can diverge in the sense
that all its mass escapes to either $+\infty $ or $-\infty $.

We have also demonstrated that the LASSO, the SCAD, and the
hard-thresholding estimator are always uniformly consistent, irrespective of
the choice of tuning parameter (except for non-sensible choices). In case
the tuning is such that the estimator acts as a conservative model selector,
we have also seen that these estimators are in fact uniformly $n^{1/2}$%
-consistent. However, uniform $n^{1/2}$-consistency no longer holds in the
case where the estimator acts like a consistent model selector (and where
the SCAD and the hard-thresholding estimator have the `oracle' property). In
fact, the estimators then have a uniform convergence rate slower than $%
n^{-1/2}$ in that they are only uniformly $\eta _{n}^{-1}$-consistent. The
asymptotic distributions of the estimators under an $\eta _{n}^{-1}$%
-scaling, rather than an $n^{1/2}$-scaling, are discussed in the appendix.

Finally, we have studied the problem of estimating the cdf of the (centered
and scaled) LASSO, SCAD, and hard-thresholding estimator. We have shown that
this cdf can not be estimated in a uniformly consistent fashion, even though
pointwise consistent estimators can be constructed with relative ease.
Moreover, we have obtained performance bounds for estimators of the cdf that
suggest that inconsistent estimators for this cdf may actually perform
better, in a uniform sense, than consistent estimators.

The phenomena observed here for distributional properties of the estimators
under consideration not surprisingly spill over to the estimators' risk
behavior. The finite-sample distributions derived in this paper in fact
facilitate a detailed risk analysis, but this is not our main focus here.
Therefore, we only point out the most important risk phenomena: We consider
squared error loss scaled by sample size (i.e., $L(\hat{\theta},\theta )=n(%
\hat{\theta}-\theta )^{2}$), and we shall compare the estimators to the
maximum-likelihood estimator based on the overall model, i.e., $\hat{\theta}%
_{U}=\bar{y}$. In finite samples, the LASSO, the SCAD, and the
hard-thresholding estimator compare favorably with $\hat{\theta}_{U}$ in
terms of risk, if the true parameter is in a neighborhood of the lower
dimensional model; outside of that neighborhood, the situation is reversed.
[This is well-known for the hard- and soft-thresholding estimators and for
more general pre-test estimators; cf. Judge and Bock (1978), Bruce and Gao
(1996). Explicit formulae for the risk of a hard-thresholding estimator are
also given in Leeb and P\"{o}tscher (2005).] As sample size goes to
infinity, again two cases need to be distinguished: If these estimators are
tuned to perform conservative model selection, the worst-case risk of the
LASSO, of the SCAD, and of the hard-thresholding estimator remains bounded
as sample size increases. If the tuning is such that these estimators
perform consistent model selection (the case when the SCAD as well as the
hard-thresholding estimator have the `oracle' property), then the worst-case
risk of these estimators increases indefinitely as sample size goes to
infinity. [In fact, this is true for any estimator that has a `sparsity'
property; see Theorem~2.1 in Leeb and P\"{o}tscher (2008a) for details.]
Thus for these estimators the asymptotic worst-case risk behavior is in
marked contrast to their favorable pointwise asymptotic risk behavior
reflected in the `oracle' property. For the SCAD, the LASSO, and for the
hard-thresholding estimator, this worst-case risk behavior is also in line
with the fact that these estimators are uniformly $n^{1/2}$-consistent if
tuned to perform conservative model selection, but that uniform $n^{1/2}$%
-consistency breaks down when they are tuned to perform consistent model
selection.

Finally we want to stress that our results should not be read as a criticism
of penalized maximum likelihood estimators per se, but rather as a warning
that the distributional properties of such estimators are more intricate and
complex than might appear at first glance.

\section*{Acknowledgments}

Input and suggestions from Ulrike Schneider are greatly appreciated.

\appendix{}

\section{Appendix}

For the case where the estimators $\hat{\theta}_{H}$, $\hat{\theta}_{S}$,
and $\hat{\theta}_{SCAD}$ are tuned to perform consistent model selection
(i.e., $\eta _{n}\rightarrow 0$ and $n^{1/2}\eta _{n}\rightarrow \infty $),
we now consider the possible limits of the distributions of $c_{n}(\hat{%
\theta}_{H}-\theta _{n})$, $c_{n}(\hat{\theta}_{S}-\theta _{n})$, and $c_{n}(%
\hat{\theta}_{SCAD}-\theta _{n})$ when $c_{n}=O(\eta _{n}^{-1})$. The only
interesting case is where $c_{n}\sim \eta _{n}^{-1}$, since for $%
c_{n}=o(\eta _{n}^{-1})$ these limits are always pointmass at zero in view
of Theorem \ref{unif_cons}.\footnote{%
There is no loss in generality here in the sense that the general case where 
$c_{n}=O(\eta _{n}^{-1})$ holds can -- by passing to subsequences -- always
be reduced to the cases where $c_{n}\sim \eta _{n}^{-1}$ or $c_{n}=o(\eta
_{n}^{-1})$ holds.} Let $G_{H,n,\theta }$, $G_{S,n,\theta }$, and $%
G_{SCAD,n,\theta }$ stand for the finite-sample distributions of $\eta
_{n}^{-1}(\hat{\theta}_{H}-\theta )$, $\eta _{n}^{-1}(\hat{\theta}%
_{S}-\theta )$, and $\eta _{n}^{-1}(\hat{\theta}_{SCAD}-\theta )$,
respectively, under $P_{n,\theta }$. Clearly, $G_{H,n,\theta
}(x)=F_{H,n,\theta }(n^{1/2}\eta _{n}x)$ and similar relations hold for $%
G_{S,n,\theta }$ and $G_{SCAD,n,\theta }$. We next provide the limits of
these distributions under `moving parameter' asymptotics. Note that comments
like in Remarks \ref{rxxx}, \ref{ry}, and \ref{ryy} also apply to the three
subsequent theorems.

\begin{theorem}
\label{hard_rescaled}Consider the hard-thresholding estimator with $\eta
_{n}\rightarrow 0$ and $n^{1/2}\eta _{n}\rightarrow \infty $. Assume that $%
\theta _{n}/\eta _{n}\rightarrow \zeta $ for some $\zeta \in {\mathbb{R}}%
\cup \{-\infty ,\infty \}$.

\begin{enumerate}
\item If $|\zeta |<1$, then $G_{H,n,\theta _{n}}$ converges weakly to
pointmass $\delta _{-\zeta }$.

\item If $|\zeta |=1$ and $n^{1/2}(\eta _{n}-\zeta \theta _{n})\rightarrow r$
for some $r\in \mathbb{R\cup \{-\infty },\mathbb{\infty \}}$, then $%
G_{H,n,\theta _{n}}$ converges weakly to%
\begin{equation*}
\Phi (r)\delta _{-\zeta }+(1-\Phi (r))\delta _{0}.
\end{equation*}

\item If $1<\left\vert \zeta \right\vert \leq \infty $, then $G_{H,n,\theta
_{n}}$ converges weakly to pointmass $\delta _{0}$.
\end{enumerate}
\end{theorem}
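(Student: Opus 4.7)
The plan is to deduce all three cases directly from the finite-sample formula (\ref{distri_H}) via the change of scale $y = n^{1/2}\eta _n x$. Since $G_{H,n,\theta }(x) = F_{H,n,\theta }(n^{1/2}\eta _n x)$, this change of variables shows that $G_{H,n,\theta }$ is the mixture of a pointmass at $-\theta /\eta _n$, with weight equal to the model-selection probability $P_{n,\theta }(\hat{\theta}_H = 0)$ given by (\ref{model_prob}), and an absolutely continuous component with density
\[
g_{n,\theta }(x) \;=\; n^{1/2}\eta _n \, \phi \bigl(n^{1/2}\eta _n x\bigr) \, \boldsymbol{1}\bigl(|x + \theta /\eta _n| > 1\bigr).
\]
This decomposition is the starting point; the three cases then fall out of analysing the atom and the continuous piece separately.

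First I would dispose of the atom. Its location $-\theta _n/\eta _n$ converges to $-\zeta $, while by Proposition \ref{selection_prob} its mass converges to $1$, $\Phi (r)$, and $0$ in cases 1, 2, and 3, respectively. So the atomic contribution alone converges weakly to $\delta _{-\zeta }$, to $\Phi (r)\delta _{-\zeta }$, and to $0$.

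Next I would show that the absolutely continuous part concentrates at $0$, irrespective of the indicator. The factor $n^{1/2}\eta _n \phi (n^{1/2}\eta _n \cdot )$ is the density of an $N(0,(n^{1/2}\eta _n)^{-2})$ law, which tends to $\delta _0$ because $n^{1/2}\eta _n \to \infty $. Quantitatively, dropping the indicator gives, for every $\varepsilon > 0$,
\[
\int _{|x| > \varepsilon } g_{n,\theta _n}(x)\,dx \;\leq \; \int _{|x| > \varepsilon } n^{1/2}\eta _n \phi \bigl(n^{1/2}\eta _n x\bigr)\,dx \;=\; 2\bigl(1 - \Phi (n^{1/2}\eta _n \varepsilon )\bigr) \;\to \; 0.
\]
Hence $g_{n,\theta _n}\,dx$, regarded as a sub-probability measure, places all of its mass at $0$ in the limit, while its total mass is $1 - P_{n,\theta _n}(\hat{\theta}_H = 0)$, which tends to $0$, $1-\Phi (r)$, and $1$ in the three cases. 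Finally I would assemble: in case 1 the atom carries all of the surviving mass and gives $\delta _{-\zeta }$; in case 3 only the absolutely continuous part survives and contributes $\delta _0$; in case 2 the atom at $-\zeta \in \{-1,1\}$ and the limit point $0$ of the continuous part are distinct, so weak convergence of the mixture yields $\Phi (r)\delta _{-\zeta } + (1-\Phi (r))\delta _0$.

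The only point that warrants care is bookkeeping in case 2: one must check that the two limiting atoms are genuinely at different locations (immediate from $|\zeta |=1$) and that the indicator in $g_{n,\theta _n}$ does not obstruct the concentration at $0$. The latter is free from the displayed bound, which simply discards the indicator. In particular, no delicate analysis of the rate at which $\theta _n/\eta _n$ approaches $\pm 1$ is needed, because the Gaussian factor shrinks at the much faster scale $(n^{1/2}\eta _n)^{-1}$ and dominates any perturbation of the truncation boundary $|x+\theta _n/\eta _n| > 1$ near $x=0$.
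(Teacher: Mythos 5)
Your proof is correct and follows essentially the same route as the paper's: both decompose $G_{H,n,\theta_n}$ into the atom located at $-\theta_n/\eta_n$, whose weight $P_{n,\theta_n}(\hat{\theta}_H=0)$ is controlled by Proposition~\ref{selection_prob}, and an absolutely continuous remainder whose mass collapses to the origin at the rate $(n^{1/2}\eta_n)^{-1}$. The one genuine, if minor, simplification in your write-up is that you treat the absolutely continuous part uniformly across all three cases by discarding the indicator and bounding the Gaussian tail, $\int_{|x|>\varepsilon} g_{n,\theta_n}\,dx \le 2(1-\Phi(n^{1/2}\eta_n\varepsilon)) \to 0$; the paper instead invokes Theorem~\ref{H_consist} and rescales for case~3, and handles case~2 through the stochastic representation $\eta_n^{-1}(\hat{\theta}_H-\theta_n) = -\eta_n^{-1}\theta_n\,\boldsymbol{1}(\hat{\theta}_H=0) + (n^{1/2}\eta_n)^{-1}Z_n\,\boldsymbol{1}(\hat{\theta}_H\neq 0)$. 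Your version is more self-contained and unified; the paper's gains brevity by reusing the earlier theorem.
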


\begin{proof}
Consider case 1 first. On the event $\{\hat{\theta}_{H}=0\}$ we have $\eta
_{n}^{-1}(\hat{\theta}_{H}-\theta _{n})=-\eta _{n}^{-1}\theta _{n}$. By
Proposition \ref{selection_prob}, $P_{n,\theta _{n}}(\hat{\theta}%
_{H}=0)\rightarrow 1$. Since $\eta _{n}^{-1}\theta _{n}\rightarrow \zeta $
by assumption, the result follows. To prove case 2 write $\eta _{n}^{-1}(%
\hat{\theta}_{H}-\theta _{n})$ as $-\eta _{n}^{-1}\theta _{n}\boldsymbol{1}(%
\hat{\theta}_{H}=0)+(n^{1/2}\eta _{n})^{-1}Z_{n}\boldsymbol{1}(\hat{\theta}%
_{H}\neq 0)$ where $Z_{n}$ is standard normally distributed under $%
P_{n,\theta _{n}}$. Since Proposition \ref{selection_prob} shows that $%
P_{n,\theta _{n}}(\hat{\theta}_{H}=0)\rightarrow \Phi (r)$, the result in
case 2 now follows as is easily seen. To prove case 3, observe that $\eta
_{n}^{-1}(\hat{\theta}_{H}-\theta _{n})=(n^{1/2}\eta _{n})^{-1}n^{1/2}(\hat{%
\theta}_{H}-\theta _{n})$ and that $n^{1/2}(\hat{\theta}_{H}-\theta _{n})$
converges to a standard normal distribution under $P_{n,\theta _{n}}$ in
view of Theorem \ref{H_consist}.
\end{proof}

\begin{theorem}
\label{soft_rescaled}Consider the soft-thresholding estimator with $\eta
_{n}\rightarrow 0$ and $n^{1/2}\eta _{n}\rightarrow \infty $. Assume that $%
\theta _{n}/\eta _{n}\rightarrow \zeta $ for some $\zeta \in {\mathbb{R}}%
\cup \{-\infty ,\infty \}$. Then $G_{S,n,\theta _{n}}$ converges weakly to
pointmass $\delta _{-\limfunc{sign}(\zeta )\min (1,\left\vert \zeta
\right\vert )}$.
\end{theorem}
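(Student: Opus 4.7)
The natural approach is to leverage the identity $\hat{\theta}_{S}=\hat{\theta}_{H}-\limfunc{sign}(\hat{\theta}_{H})\eta_{n}$ from (\ref{hard-soft}), which holds for all $\bar{y}$ (since $\limfunc{sign}(0)=0$). Dividing by $\eta_{n}$ and recentering at $\theta_{n}/\eta_{n}$ gives
\begin{equation*}
\eta_{n}^{-1}(\hat{\theta}_{S}-\theta_{n})\;=\;\eta_{n}^{-1}(\hat{\theta}_{H}-\theta_{n})\;-\;\limfunc{sign}(\hat{\theta}_{H}).
\end{equation*}
My plan is to show that this right-hand side converges \emph{in probability} to the constant $-\limfunc{sign}(\zeta)\min(1,|\zeta|)$, which immediately yields weak convergence of $G_{S,n,\theta_{n}}$ to pointmass at that constant.

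I would handle the three regimes $|\zeta|<1$, $|\zeta|>1$, and $|\zeta|=1$ in turn. For the first summand I would quote Theorem \ref{hard_rescaled}; for the second summand I would use Proposition \ref{selection_prob} to control whether $\hat{\theta}_{H}$ vanishes. In the regime $|\zeta|<1$, Proposition \ref{selection_prob}(ii).1 gives $P_{n,\theta_{n}}(\hat{\theta}_{H}=0)\to 1$, so $\limfunc{sign}(\hat{\theta}_{H})\to 0$ in probability while Theorem \ref{hard_rescaled}(1) contributes the limit $-\zeta$, combining to $-\zeta=-\limfunc{sign}(\zeta)|\zeta|$. In the regime $|\zeta|>1$, Theorem \ref{hard_rescaled}(3) kills the first summand and, because $\theta_{n}$ eventually has sign $\limfunc{sign}(\zeta)$ and $\bar{y}$ concentrates around $\theta_{n}$, a short calculation shows $\limfunc{sign}(\hat{\theta}_{H})\to\limfunc{sign}(\zeta)$ in probability, combining to $-\limfunc{sign}(\zeta)\cdot 1$.

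The borderline case $|\zeta|=1$ is the delicate one, and the main thing I need to check is that no subsequence argument on $n^{1/2}(\eta_{n}-\zeta\theta_{n})$ is required, despite the appearance of such a quantity in Theorem \ref{hard_rescaled}(2). Here I would decompose on $\{\hat{\theta}_{H}=0\}$ versus its complement. On $\{\hat{\theta}_{H}=0\}$ the expression reduces to $-\theta_{n}/\eta_{n}\to-\zeta$. On the complement, the first summand equals $(n^{1/2}\eta_{n})^{-1}Z_{n}$ with $Z_{n}\sim N(0,1)$ and is therefore negligible, while I would argue $\limfunc{sign}(\hat{\theta}_{H})\to\limfunc{sign}(\zeta)$ in probability by showing that $P_{n,\theta_{n}}(\bar{y}<-\limfunc{sign}(\zeta)\eta_{n})=\Phi(n^{1/2}(-\limfunc{sign}(\zeta)\eta_{n}-\theta_{n}))$ tends to zero (using $n^{1/2}(\limfunc{sign}(\zeta)\eta_{n}+\theta_{n})\to\infty$, which follows from $\theta_{n}/\eta_{n}\to\zeta$ with $|\zeta|=1$ and $n^{1/2}\eta_{n}\to\infty$). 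The key observation making everything go through is that $|\zeta|=1$ forces $-\zeta=-\limfunc{sign}(\zeta)$, so the two candidate atoms from Theorem \ref{hard_rescaled}(2) collapse into a single point; hence the weight $\Phi(r)$ is never needed. Modulo this collapse, the proof is entirely routine.
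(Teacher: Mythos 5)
Your argument is correct, but it takes a genuinely different route from the paper's. The paper proves Theorem~\ref{soft_rescaled} in two lines by taking the explicit cdf formula (\ref{distri_S_1}), rescaling to obtain $G_{S,n,\theta_n}(x)=\Phi(n^{1/2}\eta_n(x+1))\boldsymbol{1}(x\geq-\theta_n/\eta_n)+\Phi(n^{1/2}\eta_n(x-1))\boldsymbol{1}(x<-\theta_n/\eta_n)$, and then checking directly that this tends to $0$ below and to $1$ above the point $-\limfunc{sign}(\zeta)\min(1,|\zeta|)$. You instead use the algebraic identity (\ref{hard-soft}) to write $\eta_n^{-1}(\hat\theta_S-\theta_n)=\eta_n^{-1}(\hat\theta_H-\theta_n)-\limfunc{sign}(\hat\theta_H)$ and prove convergence in probability to the same constant, importing Theorem~\ref{hard_rescaled} and Proposition~\ref{selection_prob} for the first summand and analyzing $\limfunc{sign}(\hat\theta_H)$ directly. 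Your treatment of $|\zeta|=1$ is handled correctly and you are right that the collapse $-\zeta=-\limfunc{sign}(\zeta)$ obviates any subsequence argument on $n^{1/2}(\eta_n-\zeta\theta_n)$. The trade-off is clear: the paper's proof is shorter and entirely self-contained, whereas your proof is somewhat longer and leans on the machinery already built for hard-thresholding, but in exchange it makes transparent \emph{why} the soft-thresholding limit is a single pointmass (the $\Phi(r)$-weighted mixture from Theorem~\ref{hard_rescaled}(2) has both atoms at the same location once the $\eta_n\limfunc{sign}(\hat\theta_H)$ shift is taken into account), which the paper's direct computation does not reveal.
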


\begin{proof}
From (\ref{distri_S_1}) we obtain that 
\begin{equation*}
G_{S,n,\theta _{n}}(x)=\Phi (n^{1/2}\eta _{n}(x+1))\boldsymbol{1}(x\geq
-\theta _{n}/\eta _{n})+\Phi (n^{1/2}\eta _{n}(x-1))\boldsymbol{1}(x<-\theta
_{n}/\eta _{n}).
\end{equation*}%
Now it is easy to see that this expression converges to $0$ if $x<-\limfunc{%
sign}(\zeta )\min (1,\left\vert \zeta \right\vert )$ and to $1$ if $x>-%
\limfunc{sign}(\zeta )\min (1,\left\vert \zeta \right\vert )$.
\end{proof}

\begin{theorem}
\label{SCAD_rescaled}Consider the SCAD estimator with $\eta _{n}\rightarrow
0 $ and $n^{1/2}\eta _{n}\rightarrow \infty $. Assume that $\theta _{n}/\eta
_{n}\rightarrow \zeta $ for some $\zeta \in {\mathbb{R}}\cup \{-\infty
,\infty \}$.

\begin{enumerate}
\item If $|\zeta |\leq 2$, then $G_{SCAD,n,\theta _{n}}$ converges weakly to
pointmass $\delta _{-\limfunc{sign}(\zeta )\min (1,\left\vert \zeta
\right\vert )}$.

\item If $2<|\zeta |<a$, then $G_{SCAD,n,\theta _{n}}$ converges weakly to
pointmass $\delta _{-\limfunc{sign}(\zeta )(a-\left\vert \zeta \right\vert
)/(a-2)}$.

\item If $a\leq \left\vert \zeta \right\vert \leq \infty $, then $%
G_{SCAD,n,\theta _{n}}$ converges weakly to pointmass $\delta _{0}$.
\end{enumerate}
\end{theorem}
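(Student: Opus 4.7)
The plan is to rewrite the SCAD estimator compactly as a function of the rescaled sample mean and then invoke the continuous mapping theorem. Define $g:\mathbb{R}\to \mathbb{R}$ by
\[
g(u) \;=\; \begin{cases} 0 & \text{if } |u|\leq 1, \\ \limfunc{sign}(u)(|u|-1) & \text{if } 1<|u|\leq 2, \\ \{(a-1)u-\limfunc{sign}(u)a\}/(a-2) & \text{if } 2<|u|\leq a, \\ u & \text{if } |u|>a. \end{cases}
\]
A direct check on each of the four regions, using the definition of $\hat{\theta}_{SCAD}$, shows that $\hat{\theta}_{SCAD}/\eta_n = g(\bar{y}/\eta_n)$. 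Moreover, the one-sided limits of $g$ agree at each of the breakpoints $|u|=1,2,a$, so $g$ is continuous on all of $\mathbb{R}$.

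Because $\bar{y}=\theta_n+Z_n/n^{1/2}$ under $P_{n,\theta_n}$ with $Z_n$ standard normal, and because $n^{1/2}\eta_n\to\infty$, one has
\[
\bar{y}/\eta_n \;=\; \theta_n/\eta_n+Z_n/(n^{1/2}\eta_n) \;\longrightarrow\; \zeta
\]
in $P_{n,\theta_n}$-probability, provided $|\zeta|<\infty$. The continuous mapping theorem then yields $g(\bar{y}/\eta_n)\to g(\zeta)$ in probability, and hence
\[
\eta_n^{-1}(\hat{\theta}_{SCAD}-\theta_n) \;=\; g(\bar{y}/\eta_n)-\theta_n/\eta_n \;\longrightarrow\; g(\zeta)-\zeta
\]
in probability. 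Since convergence in probability to a constant entails weak convergence to the corresponding pointmass, the proof reduces to elementary arithmetic: one verifies that $g(\zeta)-\zeta=-\limfunc{sign}(\zeta)\min(1,|\zeta|)$ for $|\zeta|\leq 2$, that $g(\zeta)-\zeta=-\limfunc{sign}(\zeta)(a-|\zeta|)/(a-2)$ for $2<|\zeta|<a$, and that $g(\zeta)-\zeta=0$ for $|\zeta|=a$ (the last because $g(\zeta)=\zeta$ already holds at $|\zeta|=a$).

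The case $|\zeta|=\infty$ needs a brief separate treatment, since the continuous mapping theorem does not apply verbatim at the point at infinity. Here $|\bar{y}|/\eta_n\to\infty$ in probability, so $P_{n,\theta_n}(|\bar{y}|>a\eta_n)\to 1$; on that event $\hat{\theta}_{SCAD}=\bar{y}$, giving $\eta_n^{-1}(\hat{\theta}_{SCAD}-\theta_n)=Z_n/(n^{1/2}\eta_n)\to 0$ in probability, which completes case 3. I do not anticipate a real obstacle: the decisive observation is that the slower $\eta_n^{-1}$ scaling washes out exactly the kind of boundary fluctuations (at $|\zeta|=1$ in Theorem \ref{H_consist} and at $|\zeta|=a$ in Theorem \ref{SCAD_consist}) that forced those earlier proofs into delicate subsequence arguments, so that continuity of $g$ at the breakpoints suffices to collapse every case to a single pointmass.
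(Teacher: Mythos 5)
Your proof is correct, and it follows a genuinely different and more economical route than the paper's. The paper's proof proceeds by decomposing the cdf $G_{SCAD,n,\theta_n}$ into the contribution of the atomic part and the six density pieces $G_{i,n,\theta_n}$, then handles the seven or so subcases of $\zeta$ one by one, with extra subsequence arguments to dispose of the auxiliary quantities $n^{1/2}(\eta_n-\theta_n)$, $n^{1/2}(2\eta_n-\theta_n)$, $n^{1/2}(a\eta_n-\theta_n)$ at the boundary values $\zeta=1,2,a$. You instead notice the exact scaling identity $\hat{\theta}_{SCAD}/\eta_n=g(\bar{y}/\eta_n)$ with $g$ continuous, reduce everything to $\bar{y}/\eta_n\to\zeta$ in probability, and invoke the continuous mapping theorem; the various cases then collapse to the elementary computation of $g(\zeta)-\zeta$. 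This buys a uniform treatment of all finite $\zeta$ with no boundary cases and no subsequence arguments, and it makes transparent the heuristic that you state at the end: under the $\eta_n^{-1}$ scaling the second-order quantities $n^{1/2}(\cdot)$ that drove the delicate parts of Theorems \ref{H_consist} and \ref{SCAD_consist} simply do not appear. The same device (with the corresponding piecewise-continuous maps) would also give short proofs of Theorems \ref{hard_rescaled} and \ref{soft_rescaled}; note, however, that it does not extend to the $n^{1/2}$-scaled Theorem \ref{SCAD_consist}, where the analogue of $g$ would have to be applied to $n^{1/2}\bar{y}$ with a threshold $n^{1/2}\eta_n\to\infty$, so there is no fixed continuous map and the paper's more laborious decomposition is genuinely needed there. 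Your separate handling of $|\zeta|=\infty$ is also correct and necessary.
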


\begin{proof}
If $|\zeta |<1$ the proof is identical to the proof of case 1 in Theorem \ref%
{hard_rescaled}. Next assume $\zeta =1$: assume also for the moment that $%
n^{1/2}(\eta _{n}-\theta _{n})\rightarrow r$, $r\in {\mathbb{R}}\cup
\{-\infty ,\infty \}$. The atomic part $G_{0,n,\theta _{n}}$ of the cdf $%
G_{SCAD,n,\theta _{n}}(x)$ is given by $\{\Phi (n^{1/2}(-\theta _{n}+\eta
_{n}))-\Phi (n^{1/2}(-\theta _{n}-\eta _{n}))\}\boldsymbol{1}(x\geq -\theta
_{n}/\eta _{n})$ which is seen to converge weakly to $\Phi (r)\boldsymbol{1}%
(x\geq -1)$ which is the cdf of $\Phi (r)\delta _{-1}$. Furthermore,
recalling the definition of $F_{i,n,\theta }$ given in the proof of Theorem %
\ref{SCAD_consist}, 
\begin{eqnarray*}
G_{1,n,\theta _{n}}(x) &=&F_{1,n,\theta _{n}}(n^{1/2}\eta _{n}x) \\
&=&\int_{-\infty }^{n^{1/2}\eta _{n}(x+1)}\phi (z)\;\boldsymbol{1}\left(
n^{1/2}(\eta _{n}-\theta _{n})<z\leq n^{1/2}(2\eta _{n}-\theta _{n})\right)
\;dz
\end{eqnarray*}%
is seen to converge to $0$ for $x<-1$ and to $1-\Phi (r)$ for $x>-1$, since $%
n^{1/2}(\eta _{n}-\theta _{n})\rightarrow r$ and $n^{1/2}(2\eta _{n}-\theta
_{n})=n^{1/2}\eta _{n}(2-\theta _{n}/\eta _{n})\rightarrow \infty $. Hence, $%
G_{1,n,\theta _{n}}$ converges weakly to $(1-\Phi (r))\delta _{-1}$, and
thus $G_{0,n,\theta _{n}}+G_{1,n,\theta _{n}}$ converges weakly to pointmass 
$\delta _{-1}$. This implies that also $G_{SCAD,n,\theta _{n}}$ has the same
limit. Since the limit does not depend on $r$, a subsequence argument as in
the proof of Theorem \ref{SCAD_consist} completes the proof of the case $%
\zeta =1$. Next consider the case $1<\zeta <2$: Here $G_{1,n,\theta _{n}}(x)$
is easily seen to converge to $0$ for $x<-1$ and to $1$ for $x>-1$, since $%
n^{1/2}(\eta _{n}-\theta _{n})=n^{1/2}\eta _{n}(1-\theta _{n}/\eta
_{n})\rightarrow -\infty $ and $n^{1/2}(2\eta _{n}-\theta _{n})=n^{1/2}\eta
_{n}(2-\theta _{n}/\eta _{n})\rightarrow \infty $. Hence, $G_{1,n,\theta
_{n}}$ converges weakly to pointmass $\delta _{-1}$, and consequently $%
G_{SCAD,n,\theta _{n}}$ has to have the same limit. We turn to the case $%
\zeta =2$: Assume now for the moment that $n^{1/2}(2\eta _{n}-\theta
_{n})\rightarrow r$, $r\in {\mathbb{R}}\cup \{-\infty ,\infty \}$. Then $%
G_{1,n,\theta _{n}}(x)$ is seen to converge to $0$ for $x<-1$ and to $\Phi
(r)$ for $x>-1$, since $n^{1/2}(\eta _{n}-\theta _{n})=n^{1/2}\eta
_{n}(1-\theta _{n}/\eta _{n})\rightarrow -\infty $ and $n^{1/2}(2\eta
_{n}-\theta _{n})\rightarrow r$. Furthermore, note that in the case
considered $\left( (a-2)n^{1/2}\eta _{n}x+n^{1/2}(a\eta _{n}-\theta
_{n})\right) /(a-1)$ converges to $-\infty $ for $x<-1$ and to $\infty $ for 
$x>-1$. Consequently,%
\begin{equation}
\begin{split}
G_{2,n,\theta _{n}}(x)& =F_{2,n,\theta _{n}}(n^{1/2}\eta _{n}x) \\
& =\;\;\int_{-\infty }^{\left( (a-2)n^{1/2}\eta _{n}x+n^{1/2}(a\eta
_{n}-\theta _{n})\right) /(a-1)}\phi (z) \\
& \qquad \times \quad \boldsymbol{1}\left( n^{1/2}(2\eta _{n}-\theta
_{n})<z\leq n^{1/2}(a\eta _{n}-\theta _{n})\right) \,dz
\end{split}
\label{99}
\end{equation}%
is seen to converge to $0$ for $x<-1$ and to $1-\Phi (r)$ for $x>-1$, since $%
n^{1/2}(2\eta _{n}-\theta _{n})\rightarrow r$ and $n^{1/2}(a\eta _{n}-\theta
_{n})=n^{1/2}\eta _{n}(a-\theta _{n}/\eta _{n})\rightarrow \infty $. But
this shows that $G_{1,n,\theta _{n}}+G_{2,n,\theta _{n}}$ converges weakly
to pointmass $\delta _{-1}$, and hence the same must be true for $%
G_{SCAD,n,\theta _{n}}$. Since the limit does not depend on $r$, a
subsequence argument completes the proof for the case $\zeta =2$. Consider
next the case where $2<\zeta <a$: Then $G_{2,n,\theta _{n}}(x)$ is easily
seen to converge to $0$ if $x<-(a-\zeta )/(a-2)$ and to $1$ if $x>-(a-\zeta
)/(a-2)$, since $\left( (a-2)n^{1/2}\eta _{n}x+n^{1/2}(a\eta _{n}-\theta
_{n})\right) /(a-1)$ converges to $-\infty $ or $\infty $ depending on
whether $x$ is smaller or larger than $-(a-\zeta )/(a-2)$, and since $%
n^{1/2}(2\eta _{n}-\theta _{n})\rightarrow -\infty $ and $n^{1/2}(a\eta
_{n}-\theta _{n})\rightarrow \infty $. This proves that $G_{2,n,\theta _{n}}$%
, and hence $G_{SCAD,n,\theta _{n}}$, converges weakly to pointmass $\delta
_{-(a-\zeta )/(a-2)}$. Assume next that $\zeta =a$ and assume for the moment
that $n^{1/2}(a\eta _{n}-\theta _{n})\rightarrow r$, $r\in {\mathbb{R}}\cup
\{-\infty ,\infty \}$: Then the upper limit in the integral defining $%
G_{2,n,\theta _{n}}$ converges to $\infty $ if $x>0$ and to $-\infty $ if $%
x<0$. This is obvious if $\left\vert r\right\vert <\infty $, and follows
from rewriting the upper limit as $n^{1/2}\eta _{n}\left( (a-2)x+a-\theta
_{n}/\eta _{n}\right) /(a-1)$ if $\left\vert r\right\vert =\infty $.
Furthermore, the lower limit in the indicator function in (\ref{99})
converges to $-\infty $, while the upper limit converges to $r$. This shows
that $G_{2,n,\theta _{n}}$ converges weakly to $\Phi (r)\delta _{0}$.
Inspection of $G_{3,n,\theta _{n}}(x)=F_{3,n,\theta _{n}}(n^{1/2}\eta
_{n}x)=\int_{-\infty }^{n^{1/2}\eta _{n}x}\phi (z)\boldsymbol{1}\left(
z>n^{1/2}(a\eta _{n}-\theta _{n})\right) dz$ shows that this converges
weakly to $(1-\Phi (r))\delta _{0}$. Together this gives weak convergence of 
$G_{2,n,\theta _{n}}+G_{3,n,\theta _{n}}$, and hence of $G_{SCAD,n,\theta
_{n}}$, to pointmass $\delta _{0}$. Since the limit does not depend on $r$,
a subsequence argument again completes the proof of the case $\zeta =a$.
Suppose next that $a<\zeta \leq \infty $: Inspection of $G_{3,n,\theta _{n}}$
immediately shows that it (and hence also $G_{SCAD,n,\theta _{n}}$)
converges weakly to pointmass $\delta _{0}$. The remaining cases for $\zeta
\leq -1$ are proved completely analogous to the corresponding cases with
positive $\zeta $.
\end{proof}

Finally, we provide an impossibility result for the estimation of the finite
sample distributions $G_{H,n,\theta }$, $G_{S,n,\theta }$, and $%
G_{SCAD,n,\theta }$.

\begin{theorem}
\label{t3} Let $\hat{\theta}$ denote any one of the estimators $\hat{\theta}%
_{H}$, $\hat{\theta}_{S}$, or $\hat{\theta}_{SCAD}$, and write $G_{n,\theta
} $ for the cdf of $\eta _{n}^{-1}(\hat{\theta}-\theta )$ under $P_{n,\theta
}$. Let $0<\eta _{n}<\infty $ and let $t\in \mathbb{R}$ be arbitrary. Then 
\emph{every} estimator $\hat{G}_{n}(t)$ of $G_{n,\theta }(t)$ satisfies 
\begin{equation}
\sup_{|\theta |<c\eta _{n}}P_{n,\theta }\left( \left\vert \hat{G}%
_{n}(t)-G_{n,\theta }(t)\right\vert \;>\;\varepsilon \right) \quad \geq
\quad \frac{1}{2}  \label{t3.2}
\end{equation}%
for each $\varepsilon <(\Phi (n^{1/2}\eta _{n}(t+1))-\Phi (n^{1/2}\eta
_{n}(t-1)))/2$, for each $c>\left\vert t\right\vert $, and for each fixed
sample size $n$. If $\eta _{n}$ satisfies $\eta _{n}\rightarrow 0$ and $%
n^{1/2}\eta _{n}\rightarrow \infty $ as $n\rightarrow \infty $, we thus have
for each $c>\left\vert t\right\vert $%
\begin{equation}
\liminf_{n\rightarrow \infty }\inf_{\hat{G}_{n}(t)}\sup_{|\theta |<c\eta
_{n}}P_{n,\theta }\left( \left\vert \hat{G}_{n}(t)-G_{n,\theta
}(t)\right\vert \;>\;\varepsilon \right) \quad \geq \quad \frac{1}{2}
\label{t3.1}
\end{equation}%
for each $\varepsilon <1/2$ if $\left\vert t\right\vert <1$ and for $%
\varepsilon <1/4$ if $\left\vert t\right\vert =1$, where the infimum in (\ref%
{t3.1}) extends over \emph{all} estimators $\hat{G}_{n}(t)$.
\end{theorem}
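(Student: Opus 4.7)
The plan is to adapt the two-point construction from the proof of Theorem~\ref{t2}, but to place the perturbation on the $\eta_n$-scale (rather than the $n^{-1/2}$-scale) so that it straddles the jump of $G_{n,\theta}$ located at $x=-\theta/\eta_n$. The key observation is that, since $G_{n,\theta}(x)=F_{n,\theta}(n^{1/2}\eta_n x)$, the atom $F_{n,\theta}$ carries at $-n^{1/2}\theta$ becomes an atom of $G_{n,\theta}$ at $-\theta/\eta_n$, with the same weight $\Phi(n^{1/2}(-\theta+\eta_n))-\Phi(n^{1/2}(-\theta-\eta_n))$.

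Concretely, for $0<\delta<c-|t|$ I would set $\theta_n(\delta)=-(t+\delta)\eta_n$, so that $\theta_n(\pm\delta)\in\{|\theta|<c\eta_n\}$. The cdf $G_{n,\theta_n(\delta)}$ then has its atom at $t+\delta>t$, whereas $G_{n,\theta_n(-\delta)}$ has its atom at $t-\delta<t$; evaluation at $x=t$ therefore picks up the atomic mass only on the $-\delta$ side. Reading the weight off (\ref{distri_H}), (\ref{distri_S}), or (\ref{distri_SCAD}) at $\theta=\theta_n(-\delta)$ produces an atomic contribution of $\Phi(n^{1/2}\eta_n(t-\delta+1))-\Phi(n^{1/2}\eta_n(t-\delta-1))$, which tends to $\Phi(n^{1/2}\eta_n(t+1))-\Phi(n^{1/2}\eta_n(t-1))$ as $\delta\downarrow 0$. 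The absolutely continuous parts of $G_{n,\theta_n(\pm\delta)}(t)$ are continuous in $\delta$ (by dominated convergence applied to the explicit densities of Section~\ref{finite}), so their difference vanishes in the limit, whence $|G_{n,\theta_n(-\delta)}(t)-G_{n,\theta_n(\delta)}(t)|$ approaches the displayed jump size.

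For fixed $n$, the total variation distance $\|P_{n,\theta_n(\delta)}-P_{n,\theta_n(-\delta)}\|_{TV}$ tends to zero as $\delta\downarrow 0$, since the underlying mean-shift $2\delta\eta_n$ vanishes (cf.\ Lemma~A.1 of Leeb and P\"{o}tscher (2006a)). Feeding $B_n=\{\theta_n(\delta),\theta_n(-\delta)\}$ into Lemma~3.2 together with Remark~B.2 of Leeb and P\"{o}tscher (2006a), with $\varphi_n(\theta)=G_{n,\theta}(t)$ and $d(a,b)=|a-b|$, exactly as in the proof of Theorem~\ref{t2}, yields, for every $\varepsilon<|G_{n,\theta_n(-\delta)}(t)-G_{n,\theta_n(\delta)}(t)|/2$,
\begin{equation*}
\sup_{|\theta|<c\eta_n}P_{n,\theta}\bigl(|\hat{G}_n(t)-G_{n,\theta}(t)|>\varepsilon\bigr)\;\geq\;\tfrac{1}{2}\bigl(1-\|P_{n,\theta_n(\delta)}-P_{n,\theta_n(-\delta)}\|_{TV}\bigr).
\end{equation*}
Sending $\delta\downarrow 0$ delivers the finite-sample bound (\ref{t3.2}). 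The asymptotic statement (\ref{t3.1}) then follows from the observation that, under $n^{1/2}\eta_n\to\infty$, the quantity $\Phi(n^{1/2}\eta_n(t+1))-\Phi(n^{1/2}\eta_n(t-1))$ converges to $1$ when $|t|<1$ and to $1/2$ when $|t|=1$, so that any fixed $\varepsilon<1/2$, respectively $\varepsilon<1/4$, eventually satisfies the $\varepsilon$-condition in the finite-sample bound. The only real subtlety is the jump-size bookkeeping — tracking how the atom of $G_{n,\theta}$ at $-\theta/\eta_n$ crosses the fixed point $t$ when $\delta$ changes sign — because once that is nailed down, the Leeb--P\"{o}tscher (2006a) machinery transports essentially verbatim from the proof of Theorem~\ref{t2}.
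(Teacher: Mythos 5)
Your argument is correct, but the paper gets there by a much shorter route. Instead of redoing the two-point construction on the $\eta_n$-scale, the paper simply reduces Theorem~\ref{t3} to Theorem~\ref{t2} via a change of variables: since $G_{n,\theta}(t)=P_{n,\theta}(\eta_n^{-1}(\hat\theta-\theta)\le t)=F_{n,\theta}(n^{1/2}\eta_n t)$, setting $s=n^{1/2}\eta_n t$ and $\hat F_n(s)=\hat G_n(t)$ makes (\ref{t3.2}) a verbatim instance of (\ref{t2.2}), with the neighborhood $\{|\theta|<d/n^{1/2}\}$ becoming $\{|\theta|<c\eta_n\}$ under $d=c\,n^{1/2}\eta_n$, and the error range $(\Phi(s+n^{1/2}\eta_n)-\Phi(s-n^{1/2}\eta_n))/2$ rewriting to $(\Phi(n^{1/2}\eta_n(t+1))-\Phi(n^{1/2}\eta_n(t-1)))/2$; the asymptotic statement is then a trivial corollary. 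What you do is, in effect, re-execute the proof of Theorem~\ref{t2} with the perturbation $\theta_n(\pm\delta)=-(t\pm\delta)\eta_n$ so that the atom of $G_{n,\theta}$ at $-\theta/\eta_n$ straddles $t$ — which is exactly what the change of variables accomplishes automatically, since $\theta_n(\delta)=-(t+\delta)\eta_n = -(s+\delta n^{1/2}\eta_n)/n^{1/2}$ is the paper's own $\theta_n(\delta')$ from Theorem~\ref{t2} with $\delta'=\delta n^{1/2}\eta_n$. Your derivation is self-contained and makes the mechanism more visible (how the atom crosses the evaluation point, why the relevant mean shift $2\delta\eta_n$ kills the TV distance), but it duplicates work; the paper's reduction buys brevity and avoids having to re-verify the Leeb--P\"otscher (2006a) hypotheses a second time. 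Both establish the same bound with the same constants, and your handling of the $\delta\downarrow 0$ limit and of the $|t|<1$ vs.\ $|t|=1$ dichotomy for (\ref{t3.1}) is correct.
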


This result shows, in particular, that no uniformly consistent estimator
exists for $G_{n,\theta }(t)$ in case $\left\vert t\right\vert \leq 1$ (not
even over compact subsets of $\mathbb{R}$ containing the origin). In view of
Theorems \ref{hard_rescaled}, \ref{soft_rescaled}, and \ref{SCAD_rescaled}
we see that for $t>1$ we have $\sup_{\theta \in \mathbb{R}}\left\vert
G_{n,\theta }(t)-1\right\vert \rightarrow 0$ as $n\rightarrow \infty $,
hence $\hat{G}_{n}(t)=1$ is trivially a uniformly consistent estimator.
Similarly, for $t<-1$ we have $\sup_{\theta \in \mathbb{R}}\left\vert
G_{n,\theta }(t)\right\vert \rightarrow 0$ as $n\rightarrow \infty $, hence $%
\hat{G}_{n}(t)=0$ is trivially a uniformly consistent estimator.

\begin{proof}
We first prove (\ref{t3.2}). For fixed $n$ and $t$ set $s=n^{1/2}\eta _{n}t$%
. Define $\hat{F}_{n}(s)=\hat{G}_{n}(t)$. Also note that $G_{n,\theta
}(t)=F_{n,\theta }(s)$ holds. By Theorem \ref{t2} we know that%
\begin{equation*}
\sup_{|\theta |<d/n^{1/2}}P_{n,\theta }\left( \left\vert \hat{F}%
_{n}(s)-F_{n,\theta }(s)\right\vert \;>\;\varepsilon \right) \quad \geq
\quad \frac{1}{2}
\end{equation*}%
for each $\varepsilon <(\Phi (s+n^{1/2}\eta _{n})-\Phi (s-n^{1/2}\eta
_{n}))/2$ and for each $d>\left\vert s\right\vert $. Rewriting this in terms
of $t$, $\hat{G}_{n}(t)$, and $G_{n,\theta }(t)$ and setting $%
c=dn^{-1/2}/\eta _{n}$ gives (\ref{t3.2}). Relation (\ref{t3.1}) is a
trivial consequence of (\ref{t3.2}).
\end{proof}

\section*{References}

\quad \thinspace \thinspace

Bauer, P., P\"{o}tscher, B.~M. \& P. Hackl (1988): Model selection by
multiple test procedures. \emph{Statistics \ }19, 39--44.

Beran, R. (1997): Diagnosing bootstrap success. \emph{Annals of the
Institute of Statistical Mathematics \ }49, 1-24.

Bruce, A.~G. \& H. Gao (1996): Understanding WaveShrink: Variance and bias
estimation. \emph{Biometrika \ }83, 727-745.

Efron, B., Hastie, T., Johnstone, I. \& R. Tibshirani (2004): Least angle
regression. \emph{Annals of Statistics } 32, 407--499.

Fan, J. \& R. Li (2001): Variable selection via nonconcave penalized
likelihood and its oracle properties. \emph{Journal of the American
Statistical Association} \ 96, 1348-1360.

Frank, I.~E. \& J.~H. Friedman (1993): A statistical view of some
chemometrics regression tools (with discussion). \emph{Technometrics \ }35,
109-148.

Kabaila, P. (1995): The effect of model selection on confidence regions and
prediction regions. \emph{Econometric Theory} {\ 11}, 537--549.

Knight, K. \& W. Fu (2000): Asymptotics for lasso-type estimators. \emph{%
Annals of Statistics \ }28, 1356-1378.

Knight, K. (2008): Shrinkage estimation for nearly-singular designs. \emph{%
Econometric Theory \ }24, forthcoming.

Kulperger, R.~J. \& S.~E.\ Ahmed (1992): A bootstrap theorem for a
preliminary test estimator. \emph{Communications in Statistics: Theory and
Methods }\ 21, 2071--2082.

Judge, G.~G. \& M.~E. Bock (1978): \emph{The Statistical Implications of
Pre-test and Stein-Rule Estimators in Econometrics}. North-Holland.

Leeb, H. \& B.~M. P\"{o}tscher (2003): The finite-sample distribution of
post-model-selection estimators and uniform versus nonuniform
approximations. \emph{Econometric Theory} {\ 19}, 100--142.

Leeb, H. \& B.~M. P\"{o}tscher (2005): Model selection and inference: Facts
and fiction. \emph{Econometric Theory} {\ 21}, 21--59.

Leeb, H. \& B.~M. P\"{o}tscher (2006a): Performance limits for estimators of
the risk or distribution of shrinkage-type estimators, and some general
lower risk-bound results. \emph{Econometric Theory} {\ 22}, 69--97.
(Corrigendum: ibidem, 24, 581-583).

Leeb, H. \& B.~M. P\"{o}tscher (2006b): Can one estimate the conditional
distribution of post-model-selection estimators? \emph{Annals of Statistics
\ }34, 2554-2591.

Leeb, H. \& B.~M. P\"{o}tscher (2008a): Sparse estimators and the oracle
property, or the return of Hodges' estimator. \emph{Journal of Econometrics}
142, 201-211.

Leeb, H. \& B.~M. P\"{o}tscher (2008b): Can one estimate the unconditional
distribution of post-model-selection estimators? \emph{Econometric Theory \ }%
24, 338-376.

Lehmann, E.~L. \& G. Casella (1998): \emph{Theory of Point Estimation}.
Springer Texts in Statistics. New York: Springer-Verlag.

P\"{o}tscher, B.~M. (1991): Effects of model selection on inference. \emph{%
Econometric Theory} {\ 7}, 163--185.

P\"{o}tscher, B.~M. (2006): The distribution of model averaging estimators
and an impossibility result regarding its estimation. \emph{IMS Lecture
Notes--Monograph Series} \ 52, 113-129.

P\"{o}tscher, B.~M. \& U. Schneider (2009): On the distribution of the
adaptive lasso estimator. \emph{Journal of Statistical Planning and Inference%
}, forthcoming, doi:10.1016/j.jspi.2009.01.003.

Samworth, R. (2003): A note on methods of restoring consistency of the
bootstrap. \emph{Biometrika \ }90, 985--990.

Sen, P.~K. (1979): Asymptotic properties of maximum likelihood estimators
based on conditional specification. \emph{Annals of Statistics \ }7,
1019-1033.

Tibshirani, R. (1996): Regression shrinkage and selection via the lasso.\ 
\emph{Journal of the Royal Statistical Society Series B} \ 58, 267-288.

Zhao, P. \& B. Yu (2006): On model selection consistency of lasso. \emph{%
Journal of Machine Learning Research \ }7, 2541-2563.

Zou, H. (2006): The adaptive lasso and its oracle properties. \emph{Journal
of the American Statistical Association} \ 101, 1418-1429.

\end{document}